\newcommand*{\email}[1]{%
    \normalsize\href{mailto:#1}{#1}\par
    }
\newtheorem{theorem}{Theorem}
\newtheorem{lemma}[theorem]{Lemma}
\newtheorem{proposition}[theorem]{Proposition}
\newtheorem{remark}[theorem]{Remark}
\newcommand{\R}{\mathbb{R}}
\newcommand{\Z}{\mathbb{Z}}
\newcommand{\C}{\mathbb{C}}
\newcommand{\Id}{\mathrm{Id}}
\newenvironment{proof}[1][Proof]{\noindent\textbf{#1.} }{\ \rule{0.5em}{0.5em}}
\begin{document}
\title{Obstruction theory for the $\Z_2$-index  of $4$-manifolds }

\author{Chahrazade Matmat}
\affil{\small{   Département de mathématiques,
Université Constantine 1, Fr\`eres Mentouri, BP 325, Route Ain El Bey, 25017, Algeria.}

\email{c.matmat@yahoo.fr}}

\author{Christian Blanchet}
\affil{\small{ Universit\'e Paris Cit\'e, IMJ-PRG, UMR 7586 CNRS,  F-75013, Paris, France.}
\email{Christian.Blanchet@imj-prg.fr}}

\date{} 
\maketitle
\abstract{
We develop
a complete obstruction theory for the $\Z_2$-index of a compact connected 4-dimensional manifold with free involution. This $\Z_2$-index, equal to
the minimum integer $n$ for which there exists an equivariant map with target the $n$-sphere with antipodal involution, is computed in two steps using cohomology with twisted coefficients. The key ingredient
is a spectral sequence computing twisted cohomology of the orbit space of a free involution on odd complex projective spaces.
We illustrate the main results with various examples including computation of the secondary obstruction.}\\
\noindent
\textbf{2010 MSC} : 57M27, 57M60.\\
\textbf{Key words}:  4-manifolds, obstruction theory, spectral sequences, the Borsuk-Ulam theorem.
\section{Introduction}
For pairs $(X, \tau)$ with  $X $ a Hausdorff compact space and $\tau$ a fixed-point free involution on $X$, the $\Z_2$-index was  defined by Chung-Tao Yang in 1954 in \cite{Tao}. It has been mentioned multiple times in the literature; for instance, it is thought to be a useful tool for topological combinatorial applications. In this paper, we have an interest in one of the characterizations of the $\Z_2-$index. It says that it is the greatest value of $n$ satisfying that the pair $(X, \tau)$ has the Borsuk-Ulam property with respect to maps $f$ into $\R^n$. This means that there exists an $x\in X$ such that $f(\tau(x))=f(x)$.
The $\Z_2$-index also coincides with the minimum integer $n$ for which there exists an equivariant map with target the $n$-sphere with antipodal involution.\\
The investigation of the $\Z_2$-index of pairs $(X, \tau)$  is of a great historical interest in the domain of algebraic topology. The first pair referred to K. Borsuk and S. Ulam in 1933 \cite{Borsuk} was $(S^n, \tau)$, where $\tau$ is the antipodal map. Out of all the generalizations that may be made, we are most interested in the situation when $X$ is a low dimensional manifold. 
Numerous studies have addressed this case. The issue in dimension 2 was solved by Daciberg Lima Gonçalves in \cite{Daciberg2}. For dimension 3, certain classes of three dimensional manifolds have been studied in various papers. For examples, the case of double covering of Seifert manifolds was treated in \cite{BGHZ}, and that of $\mathbb{S}ol^3$-manifolds was covered in \cite{Hillman}. In \cite{blanchet}, we have discussed the general case of 3-manifolds. We have obtained criteria which allow us to give direct results for oriented 3-manifolds. Some interesting applications were given in this last paper, namely the Borsuk-Ulam theorem for double covers of torus bundles was studied.

In this paper, we study the Borsuk-Ulam property for pairs $(X, \tau)$ with respect to maps into $\R^n$ in the case where $X$ is a compact connected 4-dimensional manifold. In an equivalent manner, we investigate the $\Z_2$-index  which takes in our situation a positive value less than or equal to 4. Using obstruction theory and spectral sequences, we give criteria to distinguish between index 2 and 3. 

We begin with some notation. 
A free ${\Z}_2 $-space $(X, \tau)$ is a topological space $X$, which we assume to be path connected, and a fixed-point-free involutive homeomorphism $\tau$ on $X$. We denote by $N=X/\tau$ the orbit space.
Let  $x\in H^1(N, \Z_2)$ be the classifying class of  the principal $\Z_2$-bundle $X\twoheadrightarrow N$ i.e $x=\gamma^*(\alpha)$, where $\gamma:N\longrightarrow \R P^\infty$  classifies the bundle  and $\alpha$ is the generator of $H^1(\R P^\infty, \Z_2) \simeq \Z_2$. The class $x$ is not trivial since $X$ is path connected.

 For $n\geq 0$, the $n$-dimensional sphere $S^n$ is a $\Z_2$-space with the antipodal map $-\Id$. 
Let $(X, \tau)$ be a free $\Z_2$-space. An equivariant map $f$ between $(X, \tau) $ and $(S^n, -\Id)$  is a map $f: X \longrightarrow S^n$ wich commutes with the actions. In this case we write $f : X\overset{\Z_2}\longrightarrow S^n$. The $\Z_2$-index is defined as $$ind_{\Z_2}(X, \tau)=min\{n\in\{0, 1, 2,....\infty\}/ \exists f : X\overset{\Z_2}\longrightarrow S^n\}.$$ 
 Denote by $i_k$, $k \geq 1$, the inclusions  $\R P^k\longrightarrow \R P^\infty$, an equivalent form of the  definition above is the following 
$$ind_{\Z_2}(X, \tau)=min\{k\in\{0, 1, 2,....\infty\}/ \exists  \gamma_k : N \longrightarrow \R P^k , \gamma=i_k\circ \gamma_k\}.$$
 The $\Z_2$-index controls the Borsuk-Ulam property for pairs $(X, \tau)$ with respect to maps into $\R^n$, $n\geq 1$. The case where $X$ is a 
 connected $m$-dimensional CW-complex was treated in \cite{Daciberg3}.  From Lemma 2.4 and Theorems  3.1, 3.4 of this paper, we can extract the following assertions.
 We  denote by $\beta : H^*( \ .\ , \Z_2) \longrightarrow H^{*+1}( \ .\ , \Z),$ 
   the Bockstein homomorphism associated 
 to the short exact sequence $$0 \longrightarrow \Z \overset{\times 2}\longrightarrow \Z \longrightarrow \Z_2 \longrightarrow 0\ .$$ 
\begin{enumerate}
\item $1\leq ind_{\Z_2}(X, \tau)\leq m$.
\item $ind_{\Z_2}(X, \tau)=1 \Longleftrightarrow \beta(x)=0 $.
\item If $X$ is a compact connected $m$-dimensional manifold, then   \mbox{$ind_{\Z_2}(X, \tau)= m \Longleftrightarrow  x^m\neq 0$}.
\end{enumerate}
As a warming up for our methods, we give  a short proof using obsruction theory at the beginning of Section \ref{proof}. 
 
In the case of a $4$-dimensional manifold the above statements control when the $\Z_2$-index is $1$ or $4$. 
It remains to find a criterion for deciding between values $2$ and $3$. This is the purpose of this paper. The question will be solved by using classical tools, namely obstruction theory,
 in a case where we identify a primary obstruction in degree $3$ and possibly a secondary obstruction in degree $4$. 
The Borel construction in the case of a free $\Z_2$-space $(X,\tau)$ simply build a fibration
$$X\hookrightarrow X_{\Z_2}=\frac{X\times S^\infty}{\Z_2}\rightarrow \R P^\infty\ .$$
Here the cyclic group $\Z_2$ acts diagonally  on $X\times S^\infty$, and the fibration is defined from the second projection.  Using the first projection, we also have a fibration 
$$S^\infty\hookrightarrow X_{\Z_2}\rightarrow N=X/\tau\ ,$$
which shows that $X_{\Z_2}$ is homotopy equivalent to the orbit space $N$.
This construction allows to replace the inclusion $i_k:\R P^k\hookrightarrow \R P^\infty$ by
a fibration which 
will be also denoted by $i_k$. Then there is an obstruction theory for the lifting problem which can be formulated by using a Moore-Postnikov tower. 
 Denote by $E^{[2]}$ the first stage in the Moore-Postnikov tower associated to the fibration $i_2$.
 Then the primary obstruction is defined by the lifting problem of the classifying map $\gamma$ to $E^{[2]}$.
 We will provide a model for $E^{[2]}$ by using free involutions on odd dimensional complex projective spaces.
It appears that the vanishing of the primary obstruction in dimension $4$ is related with the
existence of an equivariant map from $(X,\tau)$ to $(\C P^3,\tau_3),$ where
$\tau_3([x_0,x_1,x_{2},x_{3}])=[-\overline x_1,\overline x_0,-\overline x_{3},\overline x_{4}]\ .$
We denote by  $q: \C P^3/\tau_3 \longrightarrow \R P^\infty$  a classifying map of the $\Z_2$-bundle $\C P^3\rightarrow \C P^3/\tau_3$.
We use the notation $\Z^-$ in the homology/cohomology of the orbit space of an involution for the local coefficient group $\Z$, where the action of the involution on chains is by $-1$.
 We establish the following lemma. 
\begin{lemma}\label{lem}
 Let $X$ be a compact  connected  $4$-dimensional CW-complex  with a fixed point free involution $\tau$, the orbit space $N=X/\tau$ and classifying map  $\gamma:N\longrightarrow \R P^\infty$. 
 The following statements are equivalent :
\begin{enumerate}[(i)]
\item [i)] There exists a lift $\gamma_1$ of $\gamma$ to $E^{[2]}$.
\item [ii)] There exists a lift $\tilde\gamma$ of $\gamma$ to $\C P^3/\tau_3$, i.e.  $\gamma=q\circ \tilde \gamma$.
\item [iii)] There exists an equivariant map from $(X,\tau)$ to $(\C P^3,\tau_3)$.
\end {enumerate}
\end{lemma}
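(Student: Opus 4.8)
Everything rests on identifying $E^{[2]}$ with the orbit space $\C P^\infty/\tau_\infty$ of a stabilised free involution on complex projective space; granting this, (i)--(iii) become three guises of a single lifting problem over $\R P^\infty$. To begin, since $\R P^\infty=K(\Z_2,1)$ and $i_2\colon\R P^2\to\R P^\infty$ is an isomorphism on $\pi_1$, the Moore--Postnikov tower of $i_2$ coincides, above its bottom stage $\R P^\infty$, with the Postnikov tower of $\R P^2$; in particular its first non-trivial stage $E^{[2]}$ is the second Postnikov section $P_2(\R P^2)$, equipped with the canonical map $P_2(\R P^2)\to P_1(\R P^2)=\R P^\infty$. (Equivalently: the homotopy fibre of $i_2$ is the universal cover $S^2$, whence $E^{[2]}\to\R P^\infty$ is the fibration with fibre $K(\Z,2)$ and monodromy $-1$ on $\pi_2$.)

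Next I would check that $\C P^\infty/\tau_\infty$ is a model for $P_2(\R P^2)$. Let $\tau_\infty$ be the stabilisation of the free involutions $\tau_{2k+1}$ on $\C P^{2k+1}$; it is a well-defined free involution on $\C P^\infty=\bigcup_k\C P^{2k+1}$, restricting to $\tau_3$ on $\C P^3$ and to the antipodal map on $\C P^1=S^2$, so that $\C P^1/\tau_1=\R P^2$. As $\C P^\infty$ is simply connected and equals $K(\Z,2)$, the orbit space $\C P^\infty/\tau_\infty$ has $\pi_1=\Z_2$, $\pi_2=\Z$ and $\pi_i=0$ for $i\ge 3$. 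Moreover the map $\R P^2=\C P^1/\tau_1\to\C P^\infty/\tau_\infty$ induced by the equivariant inclusion $\C P^1\hookrightarrow\C P^\infty$ is an isomorphism on $\pi_1$ (it is injective, because its composite with a classifying map $q_\infty\colon\C P^\infty/\tau_\infty\to\R P^\infty$ of the cover $\C P^\infty\to\C P^\infty/\tau_\infty$ is $i_2$ up to homotopy) and on $\pi_2$ (on universal covers it is the inclusion $S^2=\C P^1\hookrightarrow\C P^\infty$). By the universal property of Postnikov sections this identifies $\C P^\infty/\tau_\infty$ with $P_2(\R P^2)=E^{[2]}$ over $\R P^\infty$, with $q_\infty$ playing the role of the structure map. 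Hence a lift of $\gamma$ to $E^{[2]}$ is the same thing as a lift of $\gamma$ to $\C P^\infty/\tau_\infty$ over $q_\infty$; and because the inclusion $\C P^3/\tau_3\hookrightarrow\C P^\infty/\tau_\infty$ is $7$-connected (on universal covers it is $\C P^3\hookrightarrow\C P^\infty$, and $\C P^\infty$ is built from $\C P^3$ by attaching cells of dimension $\ge 8$) while $N$ is a $4$-dimensional complex, such a lift exists iff $\gamma$ lifts to $\C P^3/\tau_3$, using $q_\infty\circ j\simeq q$ for the inclusion $j$. This proves (i)$\Leftrightarrow$(ii).

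For (ii)$\Leftrightarrow$(iii): an equivariant map $f\colon X\to\C P^3$ descends to $\bar f\colon N\to\C P^3/\tau_3$, and viewing $f$ as a morphism of principal $\Z_2$-bundles over $\bar f$ identifies the pullback $\bar f^{*}(\C P^3\to\C P^3/\tau_3)$ with $X\to N$; since the latter is classified by $\gamma$ this gives $q\circ\bar f\simeq\gamma$. Conversely, if $\gamma=q\circ\tilde\gamma$ then $\tilde\gamma^{*}(\C P^3\to\C P^3/\tau_3)$ is a double cover of $N$ classified by $\gamma$, hence isomorphic to $X\to N$, and composing such an isomorphism with the canonical bundle map $\tilde\gamma^{*}\C P^3\to\C P^3$ produces an equivariant map $X\to\C P^3$. (Equalities of maps into $\R P^\infty=K(\Z_2,1)$ are meant up to homotopy, which is harmless as such maps are detected on $H^1(\ \cdot\ ;\Z_2)$.)

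The only non-formal point is the identification in the second paragraph --- pinning down the homotopy fibre of $i_2$ and recognising $\C P^\infty/\tau_\infty$, with the correct twisted $\Z$-coefficients and $k$-invariant, as the first Moore--Postnikov stage of $i_2$. Once that model is established, (i)$\Leftrightarrow$(ii) is a connectivity compression argument (here the hypothesis $\dim N=4$ enters) and (ii)$\Leftrightarrow$(iii) is the standard dictionary between equivariant maps and classifying maps of double covers.
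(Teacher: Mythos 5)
Your proposal is correct and follows essentially the same route as the paper: identify $E^{[2]}$ with $\C P^\infty/\tau_\infty$, deduce (i)$\Leftrightarrow$(ii) from the high connectivity of $\C P^3/\tau_3\hookrightarrow\C P^\infty/\tau_\infty$ together with $\dim N=4$, and obtain (ii)$\Leftrightarrow$(iii) from the standard dictionary between equivariant maps and bundle maps of double covers. The only difference is that you justify the identification $E^{[2]}\simeq \C P^\infty/\tau_\infty$ in detail via the universal property of Postnikov sections, whereas the paper takes it as given from the Borel construction in Section 2.
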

{Now we are able to state the following theorem.
\begin{theorem}\label{th index 3} Let $X$ be a compact  connected  $4$-dimensional CW-complex  with a fixed point free involution $\tau$, the orbit space $N=X/\tau$, and the classifying class $x\in H^1(N,\Z_2)$. 
Let   $${\beta}^-: H^2 (N, \Z_2) \longrightarrow H^3(N, \Z^-)$$
be the Bockstein homomorphism associated to the sequence of coefficients
 $$0 \longrightarrow \Z^- \overset{\times 2}\longrightarrow \Z^- \longrightarrow \Z_2 \longrightarrow 0.$$
 \begin{itemize}
\item [a)] If $ind_{\Z_2}(X, \tau)\leq 2$ then ${\beta}^-(x^2) =0.$ (Here $x^2=x\smile x$ is the square cup product.) 
\item[b)] The statements (i), (ii) , (iii) in Lemma \ref{lem} are equivalent to 
\begin{enumerate}[(iv)]
\item [(iv)] The class
${\beta}^-(x^2)$ vanishes. 
\end{enumerate}
\end{itemize}
 \end{theorem}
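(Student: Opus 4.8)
The plan is to prove part (a) first and then part (b), using the model for $E^{[2]}$ built from $(\C P^3,\tau_3)$ together with Lemma~\ref{lem}.

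\textbf{Part (a).} Suppose $ind_{\Z_2}(X,\tau)\le 2$, so there is an equivariant map $g:(X,\tau)\to(S^2,-\Id)$. Quotienting, this gives a map $\bar g:N\to \R P^2$ with $\gamma=i_2\circ\bar g$. Now I would exploit the standard identification $S^2\cong \C P^1$ on which the antipodal involution $-\Id$ is exactly $\tau_1([x_0,x_1])=[-\overline x_1,\overline x_0]$; composing with the equivariant inclusion $(\C P^1,\tau_1)\hookrightarrow(\C P^3,\tau_3)$ shows $\beta^-(x^2)$ is pulled back from $\R P^2$ via $\bar g$, so it suffices to show $\beta^-(\alpha_2^2)=0$ in $H^3(\R P^2,\Z^-)$, where $\alpha_2\in H^1(\R P^2,\Z_2)$ is the generator. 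But $H^3(\R P^2,\Z^-)=0$ for dimension reasons (a $2$-complex has no degree-$3$ cohomology), which forces $\beta^-(x^2)=\bar g^*\beta^-(\alpha_2^2)=0$. I expect this part to be essentially formal once the correct equivariant identification $S^2\cong \C P^1$ is pinned down.

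\textbf{Part (b).} By Lemma~\ref{lem} it is enough to prove (i)$\Leftrightarrow$(iv). The idea is that $E^{[2]}$ is the first stage of the Moore–Postnikov tower of $i_2:\R P^2\to \R P^\infty$, and the fibre of $E^{[2]}\to \R P^\infty$ is an Eilenberg–MacLane space $K(\pi,2)$ where $\pi=\pi_2$ of the homotopy fibre of $i_2$; since that homotopy fibre is $S^2$ (the universal cover fibre of $S^\infty\to \R P^\infty$ restricted appropriately), we get $\pi=\Z$ with the $\pi_1(\R P^\infty)=\Z_2$-action by $-1$, i.e. the local system $\Z^-$. Obstruction theory then says the single obstruction to lifting $\gamma$ through this principal $K(\Z^-,2)$-fibration lives in $H^3(N,\Z^-)$ and is a characteristic class of the fibration pulled back along $\gamma$. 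I would identify this universal obstruction class in $H^3(\R P^\infty,\Z^-)$ explicitly: it must be $\beta^-(\alpha^2)$, where $\alpha\in H^1(\R P^\infty,\Z_2)$ is the generator — this is forced because $H^3(\R P^\infty,\Z^-)$ is generated by $\beta^-(\alpha^2)$, and the obstruction is nonzero (otherwise $i_2$ itself would lift, which it does not). Pulling back along $\gamma$ and using $\gamma^*\alpha=x$ gives the obstruction as $\beta^-(x^2)$, so the lift $\gamma_1$ exists iff $\beta^-(x^2)=0$.

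\textbf{Main obstacle.} The routine parts are the two dimension/naturality arguments; the real work, which the paper has set up via its promised ``model for $E^{[2]}$'' and the spectral sequence for twisted cohomology of $\C P^3/\tau_3$, is (1) verifying that the homotopy fibre of $i_2$, with its $\Z_2$-action, is $(S^2,-\Id)$ so that the first Moore–Postnikov fibre is genuinely $K(\Z^-,2)$, and (2) computing $H^3$ of $\R P^\infty$ (equivalently $B\Z_2$) with $\Z^-$ coefficients and identifying the universal obstruction class with $\beta^-(\alpha^2)$ rather than some other class — in particular checking it is nonzero and that no $\Z_2$-torsion ambiguity spoils the identification. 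I would carry out (2) via the Bockstein long exact sequence for $0\to\Z^-\to\Z^-\to\Z_2\to 0$ over $\R P^\infty$, where $H^*(\R P^\infty,\Z^-)$ is classically known, to pin down both the group and the generator.
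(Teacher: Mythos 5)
Your proposal is correct and follows essentially the same route as the paper: part (a) via the lift to $\R P^2$, the vanishing of $H^3(\R P^2,\Z^-)$ for dimension reasons, and naturality of the Bockstein; part (b) by identifying the fibre of $E^{[2]}\to\R P^\infty$ as $K(\Z^-,2)$, locating the unique lifting obstruction in $H^3(N,\Z^-)$ as the pullback of the nonzero universal class $\beta^-(\alpha^2)\in H^3(\R P^\infty,\Z^-)\cong\Z_2$ (the paper's Proposition~\ref{prop 7}), and invoking Lemma~\ref{lem}. The detour through $\C P^1\hookrightarrow\C P^3$ in part (a) is harmless but unnecessary, since $x=\bar g^*(\alpha_2)$ already gives the naturality statement directly.
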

A lift $\tilde \gamma: N\longrightarrow \C P^3/\tau_3$ of the classifying map $\gamma: N\longrightarrow \R P^\infty$ is covered by an equivariant map $\hat \gamma: (X,\tau)\longrightarrow (\C P^3,\tau_3)$ determined up to composition with the involution $\tau$, meaning that the other equivariant map over $\tilde \gamma$ is $\hat \gamma\circ \tau=\tau_3\circ \hat \gamma$.
   In the case where the primary obstruction ${\beta}^-(x^2)$ vanishes, the secondary obstruction contains the information about reducing a lift $\tilde \gamma: N\longrightarrow \C P^3/\tau_3$ to $\R P^2$ or equivalently 
  reducing an equivariant map $\hat{\gamma}: (X,\tau)\longrightarrow (\C P^3,\tau_3)$ to $S^2=\C P^1$ (with antipodal involution).
  \begin{lemma}\label{CP3}
  The covering map $\C P^3 \longrightarrow \C P^3/\tau_3$ induces an isomorphism
$$H^4(\C P^3/\tau_3,\Z)\cong H^4(\C P^3,\Z)\cong \Z\ .$$
  \end{lemma}  
  We denote by $C\in H^4(\C P^3/\tau_3,\Z)$
the class corresponding to the square of  the generator $c^2\in H^4(\C P^3,\Z)$. 
  \begin{theorem}~ \label{Th2}
 With the above notation, in the case where $\beta(x)\neq 0$ and ${\beta}^-(x^2) = 0$ we have :
\begin{itemize}
\item[a)]   A lift of $\gamma$,  $\tilde \gamma: N\longrightarrow \C P^3/\tau_3$ reduces to $\R P^2$  if and only if $\tilde\gamma^*(C)\in H^4(N,\Z)$ vanishes.
\item[b)] The index  $ind_{\Z_2}(X,\tau)$ is equal to $2$ if and only if there exists a lift of $\gamma$, \mbox{$\tilde\gamma: N\longrightarrow \C P^3/\tau_3$} such that $\tilde\gamma^*(C)=0$.
\end{itemize}
  \end{theorem}
  
 We will discuss the computation of the secondary obstruction in the case where 
$\tau$ is oriented and $\beta(x)\neq 0$ (i.e. $ind_{\Z_2}(X, \tau)>1$).
If $\beta^-(x^2)=0$, then the homotopy classes of lifts $\tilde \gamma: N\longrightarrow \C P^3/\tau_3$ are parametrised by $H^2(N,\Z^-)$. We describe in Appendix the Leray-Serre spectral sequence $(\overline E_*^{p,q}, \overline d_*^{p,q})$ converging to $H^*(N,\Z^-)$. In the next theorem we use this spectral sequence to achieve the computation of the secondary obstruction.
We denote by $H^n(X,\Z)^{as}\subset H^n(X,\Z) $  the subgroup of anti-symmetric classes $y$, $\tau^*(y)=-y$. 
\begin{theorem}\label{th5} 
 Let $X$ be a compact  connected  oriented $4$-dimensional manifold  with a fixed point free oriented involution $\tau$, the orbit space $N=X/\tau$, the classifying map  $\gamma:N\longrightarrow \R P^\infty$ and the classifying class $x\in H^1(N,\Z_2)$. 
  Suppose that $\beta (x)$ is non zero,  then we have
  \begin{enumerate}
\item The primary obstruction $\beta^-(x^2)$ vanishes if and only if the differential $\overline d_3^{0,2}$ is surjective.
\item 
$ind_{\Z_2}(X, \tau)= 3$ if and only if 
$$\forall x\in H^2(X,\Z)^{as}; \; \bar{d}_3^{0,2}(x)\neq 0\Rightarrow x^2\neq 0\in H^4(X,\Z).$$
\end{enumerate}
\end{theorem}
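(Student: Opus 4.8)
The plan is to carry out both computations inside the Leray--Serre spectral sequence $(\overline E_*^{p,q},\overline d_*^{p,q})$ of the Borel fibration $X\hookrightarrow N\to\R P^\infty$ with coefficients $\Z^-$, as described in the Appendix. Since $H_1(X,\Z)=0$, the integral cohomology of $X$ is $\Z,0,H^2(X,\Z),0,\Z$ in degrees $0,\dots,4$, and since $\tau$ preserves the orientation, $\tau^*$ acts trivially on $H^4(X,\Z)$; hence on the $E_2$--page only the rows $q=0,2,4$ are nonzero, so $\overline E_3=\overline E_2$, $\overline E_3^{0,2}=H^2(X,\Z)^{as}$, $\overline E_3^{3,0}=H^3(\R P^\infty,\Z^-)\cong\Z_2$, and $\overline E_\infty^{3,0}=\mathrm{coker}\,\overline d_3^{0,2}$. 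For (1): the class $x^2=\gamma^*(\alpha^2)$ comes from $\R P^\infty$, and the long exact sequence of $0\to\Z^-\xrightarrow{\times 2}\Z^-\to\Z_2\to0$ over $\R P^\infty$ shows $\beta^-\colon H^2(\R P^\infty,\Z_2)\to H^3(\R P^\infty,\Z^-)$ is an isomorphism of groups $\Z_2$; so by naturality $\beta^-(x^2)=\gamma^*(g)$ for a generator $g$ of $H^3(\R P^\infty,\Z^-)=\overline E_2^{3,0}$. The edge homomorphism factors $\gamma^*$ as $\overline E_2^{3,0}\twoheadrightarrow\overline E_\infty^{3,0}=F^3H^3(N,\Z^-)\hookrightarrow H^3(N,\Z^-)$, hence $\beta^-(x^2)\neq0$ exactly when $\overline E_\infty^{3,0}\neq0$, i.e. exactly when $\overline d_3^{0,2}$ is not surjective. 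This is (1).

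For (2) I would first show the hypotheses force $ind_{\Z_2}(X,\tau)\in\{2,3\}$. Since $\tau$ is free and orientation preserving, $N$ is a closed connected oriented $4$--manifold; $H_1(X,\Z)=0$ gives $H_1(N,\Z)=\Z_2$, hence $H^1(N,\Z)=0$ and $H^1(N,\Z_2)=\Z_2=\langle x\rangle$. Thus $\beta(x)=\tilde\beta(x)\neq0$, so $ind_{\Z_2}(X,\tau)\neq1$; and $\tilde\beta(x)$ is a nonzero $2$--torsion class with $x^2=\mathrm{Sq}^1x=\rho\,\tilde\beta(x)$, whence $x^4=\rho\big((\tilde\beta x)^2\big)$ where $(\tilde\beta x)^2$ is $2$--torsion in $H^4(N,\Z)\cong\Z$, so $(\tilde\beta x)^2=0$ and $x^4=0$; by the quoted characterisation of the top index for manifolds, $ind_{\Z_2}(X,\tau)\neq4$. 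Therefore ``$ind_{\Z_2}(X,\tau)=3$'' means ``$ind_{\Z_2}(X,\tau)>2$'', which by Theorem \ref{th index 3}(a) together with Theorem \ref{Th2}(c) is equivalent to: $\beta^-(x^2)\neq0$, or else $\beta^-(x^2)=0$ and no lift $\tilde\gamma\colon N\to\C P^3/\tau_3$ satisfies $\tilde\gamma^*(C)=0$. If $\beta^-(x^2)\neq0$, then by (1) $\overline d_3^{0,2}$ is not surjective, hence is zero (its target is $\Z_2$), so the displayed implication holds vacuously; since also $ind_{\Z_2}(X,\tau)>2$ by Theorem \ref{th index 3}(a), hence $=3$, the equivalence holds in this case.

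It remains, when $\beta^-(x^2)=0$, to recast ``no lift has $\tilde\gamma^*(C)=0$'' as the displayed condition, which is the crux. By Lemma \ref{lem} equivariant maps $\hat\gamma\colon(X,\tau)\to(\C P^3,\tau_3)$ exist, two over each lift $\tilde\gamma$, and I put $u=\hat\gamma^*(c)\in H^2(X,\Z)^{as}$ (well defined up to sign since $\tau_3^*c=-c$). Naturality of $\overline E_*$ for the map of fibrations over $\R P^\infty$ induced by $\hat\gamma$ and $\tilde\gamma$ gives $\overline d_3^{0,2}(u)=\overline d_3^{0,2}(c)$, and the Appendix computation of $H^*(\C P^3/\tau_3,\Z^-)$ --- the generator of $H^2(\C P^3/\tau_3,\Z^-)$ restricts to $2c$, equivalently $c$ is not in the image of restriction --- forces $\overline d_3^{0,2}(c)\neq0$; so every realisable $u$ has $\overline d_3^{0,2}(u)\neq0$. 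By the Moore--Postnikov obstruction theory the lifts form a torsor over $H^2(N,\Z^-)$ with $\hat\gamma_\delta^*(c)=\hat\gamma^*(c)+\mathrm{res}(\delta)$, where $\mathrm{res}$ denotes restriction to $X$ along the double cover and $\mathrm{res}\big(H^2(N,\Z^-)\big)=\overline E_\infty^{0,2}=\ker\overline d_3^{0,2}$, an index-$2$ subgroup of $H^2(X,\Z)^{as}$ since $\overline d_3^{0,2}$ is now surjective; hence the classes $u$ arising from equivariant maps are exactly the $v\in H^2(X,\Z)^{as}$ with $\overline d_3^{0,2}(v)\neq0$. Finally the square comparing the covers $X\to N$ and $\C P^3\to\C P^3/\tau_3$, with Theorem \ref{Th2}(a), gives $\mathrm{res}\big(\tilde\gamma^*(C)\big)=u^2\in H^4(X,\Z)$, and $\mathrm{res}\colon H^4(N,\Z)\to H^4(X,\Z)$ is multiplication by the degree $2$ between copies of $\Z$, hence injective, so $\tilde\gamma^*(C)=0\iff u^2=0$. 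Therefore a lift with $\tilde\gamma^*(C)=0$ exists iff some $v\in H^2(X,\Z)^{as}$ with $\overline d_3^{0,2}(v)\neq0$ has $v^2=0$; negating this and combining with the previous paragraph yields exactly the stated criterion. The main obstacle is controlling which $u$ occur, i.e. showing $\overline d_3^{0,2}(u)\neq0$ for every equivariant $\hat\gamma$: this rests entirely on the Appendix identification of the restriction $H^2(\C P^3/\tau_3,\Z^-)\to H^2(\C P^3,\Z)$, for without the factor $2$ the classes $u$ would lie in $\ker\overline d_3^{0,2}$ and the criterion would be false.
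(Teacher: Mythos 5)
Your argument is correct and follows essentially the same route as the paper's: part (1) via the edge homomorphism of the twisted Leray--Serre spectral sequence, and part (2) by parametrising the lifts by the classes $u=\hat\gamma^*(c)$ with $\overline d_3^{0,2}(u)\neq 0$ (using that $\overline d_3^{0,2}(c)\neq 0$ in the model fibration, i.e.\ the generator of $H^2(E^{[2]},\Z^-)$ is $2c$ --- this is from Section 2, not the Appendix) and identifying $\tilde\gamma^*(C)$ with $u^2$ through the injective map $p^*$ on $H^4$. Your explicit treatment of the case $\beta^-(x^2)\neq 0$ and of $ind_{\Z_2}(X,\tau)\neq 4$ only makes explicit what the paper leaves implicit (Proposition \ref{prop6}).
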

We complete the results with the following proposition, which we will prove at the end of section 3.
\begin{proposition}\label{prop6}
In the case where $\tau$ is an oriented involution on $X$ ($N=X/\tau$ is an oriented manifold), we have $ind_{\Z_2}(X, \tau)<4.$ 
\end{proposition}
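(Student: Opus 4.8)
The plan is to reduce the statement to the criterion for the index being maximal that was recalled in the introduction, and then to a one-line computation with Steenrod operations. Since $X$ is a manifold of dimension $4$, the cited result from \cite{Daciberg3} gives $ind_{\Z_2}(X,\tau)=4$ if and only if $x^4\neq 0$ in $H^4(N,\Z_2)$, and in all cases $ind_{\Z_2}(X,\tau)\leq 4$. So it suffices to prove that $x^4=0$ under the hypothesis that $N=X/\tau$ is oriented. (If $N$ had nonempty boundary it would be homotopy equivalent to a CW complex of dimension $\leq 3$, hence $\gamma$ would factor up to homotopy through $\R P^3$ and $ind_{\Z_2}(X,\tau)\leq 3$; so we may assume $N$ is a closed oriented $4$-manifold.)

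The key identity is $x^4=\mathrm{Sq}^1(x^3)$, where $\mathrm{Sq}^1=\beta_2$ is the mod $2$ Bockstein. Indeed $\mathrm{Sq}^1$ is a derivation and $\mathrm{Sq}^1(x)=x^2$ since $\deg x=1$, so $\mathrm{Sq}^1(x^2)=2x^3=0$ and $\mathrm{Sq}^1(x^3)=\mathrm{Sq}^1(x^2)\,x+x^2\,\mathrm{Sq}^1(x)=x^4$. Now apply the Wu formula on the closed oriented $4$-manifold $N$: for $u\in H^3(N,\Z_2)$ one has $\langle\mathrm{Sq}^1 u,[N]\rangle=\langle v_1\smile u,[N]\rangle$, where $[N]$ is the $\Z_2$-fundamental class and $v_1=w_1(N)$ the first Wu class. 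Since $N$ is orientable, $w_1(N)=0$; taking $u=x^3$ gives $\langle x^4,[N]\rangle=0$, and since $H^4(N,\Z_2)\cong\Z_2$ with the pairing against $[N]$ an isomorphism, we conclude $x^4=0$. Hence $ind_{\Z_2}(X,\tau)\neq 4$, and together with $ind_{\Z_2}(X,\tau)\leq 4$ this yields $ind_{\Z_2}(X,\tau)<4$.

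The argument is short; the only subtleties are the reduction to the closed oriented case and the correct invocation of the equivalence $ind_{\Z_2}(X,\tau)=4\Leftrightarrow x^4\neq 0$ from \cite{Daciberg3}, so there is no real obstacle to overcome. One may also dispense with Wu classes: $x^4=\beta_2(x^3)$ is the mod $2$ reduction of the integral Bockstein $\beta(x^3)\in H^4(N,\Z)$, which is $2$-torsion by exactness of the Bockstein sequence, while $H^4(N,\Z)\cong\Z$ is torsion-free because $N$ is a closed oriented $4$-manifold; hence $\beta(x^3)=0$ and $x^4=0$.
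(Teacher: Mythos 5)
Your proof is correct. Your closing remark --- that $x^4=\rho(\beta(x^3))$ with $\beta$ the integral Bockstein, that $\beta(x^3)$ is $2$-torsion by exactness, and that $H^4(N,\Z)\cong\Z$ is torsion-free for $N$ closed oriented --- is exactly the argument the paper gives, so in substance you have reproduced the paper's proof. Your main route via $\mathrm{Sq}^1$ and the Wu formula is a genuine variant: instead of using torsion-freeness of $H^4(N,\Z)$, you use $\langle\mathrm{Sq}^1 u,[N]\rangle=\langle v_1\smile u,[N]\rangle$ with $v_1=w_1(N)=0$. The two are essentially two faces of the same fact ($\mathrm{Sq}^1$ vanishes on the top degree of a closed orientable manifold), but the Wu-formula version has the small advantage of making transparent why orientability is exactly the right hypothesis (for $\R P^4$ one gets $\mathrm{Sq}^1(x^3)=w_1x^3=x^4\neq0$), whereas the Bockstein version isolates the identity $x^4=\rho\beta(x^3)$, which the paper only asserts and you actually verify via the derivation property of $\mathrm{Sq}^1$. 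Your preliminary reduction to the closed case (a compact manifold with nonempty boundary is homotopy equivalent to a $3$-complex, so the classifying map compresses into $\R P^3$) is a point the paper leaves implicit and is worth keeping.
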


This paper is divided into four sections and an appendix.  The main results stated in Section 1 will be proved  in Section 3. In Section 2, we will study the cohomology with twisted coefficients for the orbit space of a free involution acting on odd complex projective spaces. In Section 4, we will have some families of  examples and applications. All free involutions on $S^1 \times S^3$ will be studied. Following that, we will consider an oriented example using presentation of closed oriented 4-dimensional manifolds. Then we will consider a case when the primary obstruction vanishes. This enables us to investigate the secondary obstruction. We end this paper with an appendix about the Leray-Serre spectral sequence  over $\Z_2$-twisted coefficients.

\section{Free involution on odd dimensional complex projective spaces}
\label{free_invol}
Free involutions on odd complex projective spaces are studied in \cite{Singh}.
For $n\geq 0$, the map $\tau_n:\C P^{2n+1}\rightarrow \C P^{2n+1}$  defined by $\tau_n([x_0,x_1,\dots,x_{2n},x_{2n+1}])=[-\overline x_1,\overline x_0,\dots,-\overline x_{2n+1},\overline x_{2n}]\ $
is a free involution. We denote by $\tau_\infty:\C P^{\infty}\longrightarrow \C P^{\infty}$ its inductive limit.
  The orbit space $\C P^{\infty}/\tau_\infty$ is homotopy equivalent to  the Borel construction $E^{[2]}=(\C P^\infty)_{\Z_2}=\frac{\C P^\infty \times S^\infty}{\Z_2}$ 
and we have a fibration
$\C P^\infty\hookrightarrow E^{[2]} \overset{p_2}\longrightarrow \R P^\infty\ .$
 The fibration $p_2$ whose fiber is $\C P^\infty= K(\Z,2)$ is
 the first stage in the Moore-Postnikov tower associated with the inclusion $i_2: \R P^2\rightarrow \R P^\infty$. 
 In order to prove Theorems  \ref{th index 3} and \ref{Th2}, we will need to understand
  \begin{enumerate}
  \item the first obstruction for lifting  to $E^{[2]}$ a map $f:N\longrightarrow \R P^\infty$, which lives
  in $H^3(\R P^\infty,\pi_2(\C P^\infty))$,
  \item the cohomology $H^4(E^{[2]}, \Z )$, which contains  the secondary obstruction  defined from the lifting problem of maps  $N \longrightarrow E^{[2]}$ to $\R P^2$. 
  \end{enumerate}
    We observe that the action of $\pi_1(\R P^\infty)$ on $\pi_2(\C P^\infty)$ is non trivial, so that we have to deal with local coefficients.
  Recall that we denote by $H^*(\R P^\infty,\Z^-)$ the cohomology of $\R P^\infty$ with local coefficients given by the infinite cyclic group with non trivial action of $\pi_1(\R P^\infty)=\Z_2$. We compute below $H^*(\R P^\infty,A)$ for any $\Z[\Z_2]$-module $A$.
  \vspace{10pt}\begin{proposition}\label{prop5}
 \label{proj_s_as}Let $A$ be a $\Z[\Z_2]$-module. 
If we denote by $A^s\subset A$ (resp. $A^{as}\subset A$) the subgroup of symmetric (resp. anti-symmetric) classes under the action $\tau$ of the generator of $\Z_2$, then 
$$H^p(\R P^\infty,A)=\left\{\begin{array}{l}A^s \text{ if $p=0$},\\
A^{as}\otimes \Z_2 \text{ if $p$ is odd},\\
A^{s}\otimes \Z_2 \text{ if $p$ is  even positive}.
\end{array}
\right.$$
In particular
$H^n(\R P^\infty,\Z^-)=\left\{\begin{array}{l}
  0\text{ if n is even}\\
  \Z_2\text{ if n is odd}
  \end{array}\right.$
\end{proposition}
  
  \begin{proof} 
  We use an equivariant cell decomposition of the universal cover $S^\infty$ with two cells in each dimension $k$, namely $e_k, Te_k$ with $T$ the antipodal map.  
As a $\Z[Z_2]$-module, the cellular complex 
$C_*(S^\infty)$ has generator $e_k$ in dimension $k$ and boundary map $\partial e_k=e_{k-1} + (-1)^{k}Te_{k-1}$.
Here  $(-1)^k$ is the orientation sign coming from the action of $T$.
 The cohomology complex $C^*(\R P^\infty,A)=Hom_{\Z[\Z_2]}(C_*(S^\infty),A)$ is
$$A\overset{1-\tau}\longrightarrow A\overset{1+\tau}\longrightarrow A\overset{1-\tau}\longrightarrow A\overset{1+\tau}\longrightarrow A\dots$$
It follows that 
\begin{align*}
H^0(\R P^\infty,A)&=\ker(1-\tau)=A^s,\\
  H^p(\R P^\infty,A)&=\ker(1+\tau)/\mathrm{Im}(1-\tau)=A^{as}/2A^{as},
 \text{ if $p$ is odd},\\
 H^p(\R P^\infty,A)&=\ker(1-\tau)/\mathrm{Im}(1+\tau)=A^{s}/2A^{s}, 
\text{ if $p$ is even positive}.
 \end{align*}
  The statement follows.
 \end{proof}
The cohomology rings $H^*(\C P^n/\Z_2,\Z_2)$, $n$ odd, are computed in \cite{Singh} from the modulo $2$ Leray-Serre spectral sequence.
A presentation is given by $H^*(\C P^n/\Z_2,\Z_2)=\Z_2[x,y]/<x^3,y^{(n+1)/2}>$,
where $x$ is in degree one and $y$ is in degree $2$.
It follows that $H^*(E^{[2]},\Z_2)=H^*(\C P^\infty/\Z_2,\Z_2)=\Z_2[x,y]/<x^3>$. Here we complete the computation for integer coefficients and twisted coefficients $\Z^-$.
\vspace{10pt}\begin{theorem}~\label{cohoE2} We have
 $$H^k(E^{[2]},\Z)=\left\{
\begin{array}{l}
0\text{ if $k$ is odd,}\\
\Z\text{ if $k\equiv 0$ mod $4$,}\\
\Z_2\text{ if $k\equiv 2$ mod $4$.} 
\end{array}\right.\hfill\hspace{1cm}
 H^k(E^{[2]},\Z^-)=\left\{
\begin{array}{l}
\Z_2\text{ if $k\equiv 1$ mod $4$,}\\
\Z\text{ if $k\equiv 2$ mod $4$,} \\
0\text{ else.}\\
\end{array}\right.$$
\end{theorem}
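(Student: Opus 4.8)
The plan is to compute $H^*(E^{[2]},-)$ by running the Leray--Serre spectral sequence of the Borel fibration $\C P^\infty\overset{j}\hookrightarrow E^{[2]}\overset{p_2}\longrightarrow\R P^\infty$, first with coefficients $\Z$ and then with $\Z^-=p_2^*\Z^-$, and determining all differentials. First I would write down $E_2$. Since the generating involution sends $c\in H^2(\C P^\infty,\Z)$ to $-c$, the group $\pi_1(\R P^\infty)=\Z_2$ acts on $H^{2m}(\C P^\infty,\Z)\cong\Z$ by $(-1)^m$; hence in the $\Z$-coefficient spectral sequence the row $q=2m$ is $H^*(\R P^\infty,\Z)$ for $m$ even and $H^*(\R P^\infty,\Z^-)$ for $m$ odd (odd rows vanishing), and for the $\Z^-$-coefficient spectral sequence the additional twist exchanges the two cases. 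Substituting $H^*(\R P^\infty,\Z)=(\Z,0,\Z_2,0,\Z_2,\dots)$ and Proposition \ref{prop 7} makes both $E_2$-pages explicit, and $d_2=0$ because its target always lies in an odd row, so $E_3=E_2$.

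The essential point is the transgression of the fibre class. In the $\Z^-$-coefficient spectral sequence $c\in H^2(\C P^\infty,\Z)$ survives to $E_3^{0,2}$ (the extra $\Z^-$-twist cancels the sign action), so $d_3(c)\in E_3^{3,0}=H^3(\R P^\infty,\Z^-)\cong\Z_2$, and I claim $d_3(c)$ is the nonzero element. To see this I would restrict along the $\Z_2$-equivariant inclusion $(\C P^1,\tau_0)\hookrightarrow(\C P^\infty,\tau_\infty)$, where $\tau_0$ is the antipodal involution of $\C P^1=S^2$: it gives a map of Borel fibrations over $\R P^\infty$ whose source is $S^2\times_{\Z_2}S^\infty$, that is, the unit sphere bundle $S(3\gamma)$ of three copies of the tautological line bundle $\gamma\to\R P^\infty$. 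Since $c$ restricts to a generator of $H^2(S^2,\Z)$, naturality of transgression identifies $d_3(c)$ with the twisted Euler class $e(3\gamma)\in H^3(\R P^\infty,\Z^-)$; and $e(3\gamma)\neq0$, because its mod $2$ reduction is $w_3(3\gamma)=\alpha^3\neq0$ (from $w(3\gamma)=(1+\alpha)^3$) while reduction $H^3(\R P^\infty,\Z^-)\to H^3(\R P^\infty,\Z_2)$ is an isomorphism (immediate from the twisted cellular cochain complex in the proof of Proposition \ref{prop 7}). This is also the statement that the first $k$-invariant of the Moore--Postnikov tower of $i_2$, the primary obstruction $\beta^-(\alpha^2)$, is nonzero.

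The rest is essentially bookkeeping. The two spectral sequences are modules over the multiplicative $\Z$-coefficient one via the pairings $\Z\otimes\Z\to\Z$, $\Z\otimes\Z^-\to\Z^-$, $\Z^-\otimes\Z^-\to\Z$, and $E_2$ is generated over $H^*(\R P^\infty,\Z)$ by $c$ together with the (permanent) classes $v\in H^1(\R P^\infty,\Z^-)$ and $w_2:=v^2\in H^2(\R P^\infty,\Z)$; so the Leibniz rule and $d_3(c)=vw_2$ determine every $d_3$. A direct check shows the $d_3$'s are as large as they can be: they kill everything except the powers $c^{2j}$ and $c^{2j}w_2$ in the $\Z$-page and $c^{2j}v$ in the $\Z^-$-page, while on the bottom row of the $\Z^-$-page $d_3$ maps $c^{2j+1}$ onto $\Z_2$ with kernel $\cong\Z$. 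Thus $E_4=E_\infty$, concentrated in columns $p\le 2$ (respectively $p\le 1$), with $E_\infty^{0,4j}\cong\Z$, $E_\infty^{2,4j}\cong\Z_2$ for $\Z$-coefficients and $E_\infty^{0,4j+2}\cong\Z$, $E_\infty^{1,4j}\cong\Z_2$ for $\Z^-$-coefficients; higher differentials vanish for position reasons. Since no total degree carries more than one nonzero $E_\infty^{p,q}$ there are no extension problems, and reading off the totals yields the two stated tables.

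I expect the nonvanishing of $d_3(c)$ to be the only real obstacle; the sphere-bundle comparison handles it cleanly. (An alternative that avoids any nontrivial differential is to run the same computation through the quaternionic fibration $\R P^2\to\C P^{2n+1}/\tau_n\to\mathbb{H}P^n$ and its limit $\R P^2\to E^{[2]}\to\mathbb{H}P^\infty$: the base is simply connected and $4$-periodic, so both spectral sequences collapse at $E_2$ for degree reasons and the answer drops out at once --- at the cost of first producing this fibration.)
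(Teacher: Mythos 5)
Your proposal is correct and takes essentially the same route as the paper: the Leray--Serre spectral sequence of the Borel fibration $\C P^\infty\hookrightarrow E^{[2]}\to\R P^\infty$ with both coefficient systems, the vanishing of $d_2$, a restriction over $\C P^1$ to pin down the key transgression $\overline d_3^{0,2}$, and the multiplicative/module structure (in particular $d_3(c^2)=2c\,\overline d_3(c)=0$) to settle all remaining differentials. The only cosmetic difference is how the nonvanishing of $\overline d_3^{0,2}$ is concluded from that restriction: you identify the restricted fibration with the sphere bundle $S(3\gamma)$ and use the twisted Euler class via $w_3(3\gamma)=\alpha^3\neq 0$, whereas the paper notes that the restricted total space is homotopy equivalent to $\R P^2$ and invokes $H^3(\R P^2,\Z^-)=0$; both are valid forms of the same comparison.
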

\begin{proof}
 The projection $p_2: E^{[2] } \longrightarrow \R P^\infty$ is a fibration with fiber $\C P^\infty$. Here
 $\pi_1(\R P^\infty)=\Z_2$ acts non trivially on $H^*(\C P^\infty)=\Z[c]$, so that the Leray-Serre spectral sequence involves local coefficients. \\We get 
 a spectral sequence $\{E_r^{p,q}, d_r\},$ where  \mbox{$E_2^{p,q}= H^p( \R P^\infty, H^q (\C P^\infty, \Z))$} and $d_r^{p,q} : E_r^{p,q} \longrightarrow E_r^{p+r,q-r+1}$, whose limit is $H^{p+q}(E^{[2]}, \Z)$.
The construction of this spectral sequence involves the filtration coming from a CW-complex structure of the basis $\R P^\infty$. This can be adapted to the twisted coefficient group $\Z^-$. For convenience we give the useful formulation of the Leray-Serre spectral sequence in the Appendix. From Theorem \ref{LSSS} we have a spectral sequence $\{\overline E_r^{p,q}, \overline d_r\},$ where  \mbox{$\overline E_2^{p,q}= H^p( \R P^\infty,  H^q (\C P^\infty, \Z)\otimes \Z^-)$} and $\overline d_r^{p,q}: \overline E_r^{p,q} \longrightarrow \overline E_r^{p+r,q-r+1}$, whose limit is $H^{p+q}(E^{[2]},\Z^-)$. Here the local groups 
$H^q (\C P^\infty, \Z)$ and $H^q (\C P^\infty, \Z)\otimes \Z^-$ are the $\Z[\Z_2]$-modules given by 
$$H^q(\C P^\infty,\Z)=\left\{\begin{array}{ll}
\Z^-&\text {if } n\equiv 2 \text{ mod } 4,\\
\Z&\text {if } n\equiv 0 \text{ mod } 4,\\
0& \text{ else. }\end{array}\right.\ \ H^q(\C P^\infty,\Z)\otimes \Z^-=\left\{\begin{array}{ll}
\Z&\text {if } n\equiv 2 \text{ mod } 4,\\
\Z^-&\text {if } n\equiv 0 \text{ mod } 4,\\
0& \text{ else. }\end{array}\right.$$
We write below the bottom-left part of the  second pages $E_2$ and $\overline E_2$.
$$ 
 \begin{tabular}{|c|c|c|c|c|}
\hline 
$\Z $& 0  & $\Z_2$ & 0 & $\Z_2$ \\ 
\hline 
0 & 0 & 0 & 0 & 0 \\ 
\hline 
0 & $\Z_2$ & 0 &$ \Z_2$& 0 \\ 
\hline 
0 & 0 & 0 & 0 & 0 \\ 
\hline 
$E_2^{0,0} \simeq \Z$ & 0 & $\Z_2$& 0& $\Z_2$\\ 
\hline 
\end{tabular} 
 \hspace{2cm}
 \begin{tabular}{|c|c|c|c|c|}
\hline 
 0  & $\Z_2$ & 0 & $\Z_2$ &0\\ 
\hline 
0 & 0 & 0 & 0 & 0 \\ 
\hline 
 $\Z$ & 0 &$ \Z_2$& 0&$\Z_2$ \\ 
\hline 
0 & 0 & 0 & 0 & 0 \\ 
\hline 
$\overline E_2^{0,0} \simeq 0$  & $\Z_2$& 0& $\Z_2$&0\\ 
\hline 
\end{tabular} $$
We quote that in the spectral sequences the coefficients are local. The groups $ E_2^{p,q}$ (resp. $ \overline E_2^{p,q}$) vanish for odd $q$, so that $d_2$ (resp. $\overline d_2$) is zero and $E_3^{p,q} \simeq E_2^{p,q}$ (resp. $\overline E_3^{p,q} \simeq \overline E_2^{p,q}$)
for all $p, q$.\\
We first study $\overline E_3$ which gives $H^0(E^{[2]},\Z^-)=0$, $H^1(E^{[2]},\Z^-)=\Z_2$, \mbox{$H^2(E^{[2]},\Z^-)=\ker(\overline d_3^{0,2})$}
is a free group, $H^3(E^{[2]},\Z^-)=\mathrm{coker}(\overline d_3^{0,2})$. 
We claim that $\overline d_3^{0,2}$ is non zero. This can be seen as follows. The space $E^{[2]}$ was constructed by applying the well known Borel construction to $\C P^\infty$ with its free involution.  This Borel construction  can be restricted to the subspace 
$\C P^1=S^2$ and produces a space which is homotopy equivalent to $\R P^2$. For this subspace, the second page of the spectral sequence coincides with the three bottom rows in $\overline E_2$ and converges to $H^*(\R P^2,\Z^-).$ The vanishing of $H^3(\R P^2,\Z^-)$ forces the differential $\overline d_3^{0,2}$ to be non zero. Our argument also proves that $\overline d_3^{2k,2}$ is non zero for $k>0$.  
 We will use below the product structure, see  Remark \ref{product} in Appendix.
Let us denote by $\alpha_-$ the generator of $H^1(\R P^\infty,\Z^-)$, and $c$  the generator of $H^2(\C P^\infty,\Z)$. The non zero groups in $\overline E_3$ are $\overline E^{p,2k}$ where $p\geq 0$ and $k\geq 0$ have different parity, generated  by the products $(\alpha_-)^p c^k$. The resulting differential $\overline d_3$ is $\overline d_3^{2\nu+1,4l}=0$ for $\nu,l\geq 0$,  $\overline d_3^{0,4l+2}$ is surjective for $l\geq 0$,
and $\overline d_3^{2\nu,4l+2}$ is an isomorphism for $\nu>0,l\geq 0$. 
The  page $\overline E_4$ gives the limit which contains the expected groups.
We now consider the spectral sequence $E$. 
From $E_3$,
we obtain $H^0(E^{[2]},\Z)=\Z$, $H^1(E^{[2]},\Z)=0$, $H^2(E^{[2]},\Z)=\Z_2$, $H^3(E^{[2]},\Z)=\ker(d_3^{1,2})\subset \Z_2$. Similarly to the previous computation,  a restriction argument shows that
 $d_3^{2k+1,2}$ is an isomorphism for $k\geq 0$.
We have a  product $\overline E_3\otimes \overline E_3 \rightarrow E_3$ which is compatible with the differential.
 We have that  $c^2$ is the generator of $H^4(\C P^\infty,\Z)={E}_3^{0,4}$. We obtain that $d_3(c^2)=2 c\overline d_3(c)=0$. Hence, $d_3^{0,4}$ is zero.
Using the product structure again we obtain  $ d_3^{2\nu,4l}=0$  and $ d_3^{2\nu+1,4l+2}$ is an isomorphism for  $\nu,l\geq 0$. The computation of all groups follows.
\end{proof}
\begin{remark}
Our computation shows that the generator of $H^2(E^{[2]},\Z^-)$ is $2c$ while the generator of $H^4(E^{[2]},\Z)$ is $C=c^2$. So the square map
 $H^2(E^{[2]},\Z^-)\longrightarrow H^4(E^{[2]},\Z)$ sends the generator $c^-=2c$ to $4C$.
\end{remark}
\section{Proof of the main theorems}\label{proof}
In this section we will prove our main results. As announced in the introduction, we start with an obstruction theoretic proof of the known results \cite{Daciberg 3}.
\begin{proposition}Let $X$ be an $m$-dimensional CW-complex with a fixed point free involution $\tau$, the orbit space $N$ and the classifying class $x$.
\begin{itemize}
\item[a)] $1\leq ind_{\Z_2}(X, \tau)\leq m$.
\item[b)] $ind_{\Z_2}(X, \tau)=1 \Longleftrightarrow \beta(x)=0 $.
\item[c)] If $m$ is odd, then \mbox{$ind_{\Z_2}(X, \tau)= m \Longleftrightarrow  \beta x^{m-1}\neq 0$}.
\item[d)] If $m$ is positive even, then \mbox{$ind_{\Z_2}(X, \tau)= m \Longleftrightarrow  \beta^- x^{m-1}\neq 0$}.
\item[e)] If $X$ is a compact connected $m$-dimensional manifold, then \mbox{$ind_{\Z_2}(X, \tau)= m \Longleftrightarrow  x^m\neq 0$}.
\end{itemize}
\end{proposition}
\begin{proof}
The reduction problem of the classifying map $\gamma: N\rightarrow \R P^\infty$
 to $\R P^k$ can be replaced by the lifting problem for the fibration
$$S^k\hookrightarrow \frac{S^k\times S^\infty}{\Z_2}\rightarrow \R P^\infty.$$
The first obstruction lives in $H^{k+1}(N,\pi_k(S^k))$. Here $\pi_k(S^k)$ is a local group corresponding to the homological action of the antipodal map, $\pi_k(S^k)=\Z$ if $k$ is odd, and $\pi_k(S^k)$ is equal to the twisted 
cyclic group denoted by $\Z^-$ if $k$ is even.
 This first obstruction vanishes if $k$ is bigger than the dimension $m$ of the CW-complex $X$, which proves a).  
 This obstruction is given by a non trivial universal class in 
 $$H^{k+1}(\R P^\infty,\pi_k(S^k))=\left\{\begin{array}{l}
 H^{k+1}(\R P^\infty,\Z)=\Z_2 \text{ if $k$ is odd,}\\
  H^{k+1}(\R P^\infty,\Z^-)=\Z_2 \text{ if $k$ is even,}\end{array}\right. $$ 
  with generator  $\beta(\alpha^k)$ if $k$ is odd, and $\beta^-(\alpha^k)$ if $k$ is even.
 See Proposition \ref{proj_s_as} for the twisted case. For $k=1$, we get the unique obstruction class
 $\beta(x)$, which proves b). For $k=m-1$, we have a unique obstruction
class which, by functoriality, is  $\beta(x^{m-1})\in H^m(N,\Z)$ if $m$ is even, and  $\beta^-(x^{m-1})\in H^m(N,\Z^-)$ if $m$ is odd.
We have proved statements c) and d). 
 The reduction of the obstruction class to  $H^m(N,\Z_2)$, read from $H^m(\R P^\infty,\Z_2)$, is equal to $x^m$.
If $X$ is a compact connected $m$-dimensional manifold, then either the top cohomology group
 $H^m(N,\Z^\pm)$ is infinite cyclic or has order $2$, and in both cases the vanishing of the obstruction is equivalent to the vanishing of its modulo $2$ reduction $x^m\in H^m(N,\Z_2)$ which proves e).
 \end{proof}
 
\begin{proof}[Proof of Lemma \ref{lem}]
The inclusion $ \C P^3/\tau_3 \subset \C P^\infty/\Z_2$ induces isomorphisms in $\pi_m$ for $m\leq 4$, which implies that $\C P^3/\tau_3$ is  $4$-equivalent to $\C P^\infty/\Z_2$. That is canonically homotopy equivalent to $E^{[2]}$. This implies the equivalence between  (i) and (ii). \\
An equivariant map $h: (X,\tau)\rightarrow (\C P^3,\tau_3)$ induces a quotient map $\overline h: N=X/\tau\rightarrow \C P^3/\tau_3$ which is compatible with the classifying maps up to homotopy. This means that $q\circ \overline h: N\rightarrow \R P^\infty$ is a classifying map;  it is homotopic to $\gamma$ which hence has a lift to $\C P^\infty/\tau_3$. 
Conversely, a map
 $\tilde \gamma:N\rightarrow \C P^\infty/\tau_3$ such that $\gamma=q\circ \tilde \gamma$ satisfies $\tilde \gamma^*(x_3)=x$ where $x_3=q^*(\alpha)$ is the classifying class for $\C P^3\rightarrow \C P^3/\tau_3$.
 It follows that $\tilde \gamma$ can be lifted to the double coverings giving an equivariant map. This achieve the equivalence between (ii) and (iii). \end{proof}

\vspace{10pt}\begin {proof}[Proof of Theorem \ref{th index 3}]
 We have that $ind_{\Z_2}(X,\tau)\leq 2$ if and only if the classifying map $\gamma: N\rightarrow \R P^\infty$ has a lift $\gamma_2: N\rightarrow \R P^2$. If such lift exists, then, since $H^3(\R P^2,\Z^-)= 0$, we have by functoriality of the Bockstein map that $\beta^-(x^2)=0$, which proves the statement a).\\
  The first obstruction for lifting $\gamma : N\longrightarrow \R P^\infty$ to $\R P^2$ is the unique obstruction for lifting $\gamma$ to $E^{[2]}=\C P^\infty / \tau_\infty$ and lives in $H^3(N,\pi_2(\C P^\infty))=H^3(N,\Z^-)$. Here the action of $\pi_1$ is non trivial whence the use of local coefficients.
 This obstruction is a universal class $\gamma^{*}(o)$ with $o\in H^3(\R P^\infty,\Z^-)=\Z_2$ and it is non zero since $\R P^\infty$ does not retract on $\R P^2$. The unique non zero class in  $H^3(\R P^\infty,\Z^-)$ is $\beta^-(\alpha^2)$, where $\alpha$ is the generator of $H^1(\R P^\infty,\Z/2\Z)$. By functoriality we have that the obstruction is $\gamma^{*}(o)=\beta^-(x^2)$.
 We have $\beta^-(x^2) =0$ if and only if there exists a lift of $\gamma$, $\tilde\gamma : N \longrightarrow E^{[2]}\cong \C P^\infty/\tau_\infty$. Using  Lemma \ref{lem} we  obtain the  statement b).
\end{proof}
\vspace{0,5 cm}

 When the first obstruction $\beta^-(x^2)$ vanishes then the liftings up to homotopy are parametrized by $H^2(N,\Z^-)$.
 For each of them we have a secondary obstruction which we study now for proving  Theorem \ref{Th2}.
 
\vspace{10pt}\begin{proof}[Proof of Lemma \ref{CP3}]
The calculation in the spectral sequence for the fibration $i_2$ in the proof of Theorem \ref{cohoE2} proves that $H^4(\C P^\infty/\tau_\infty, \Z) = H^4(\C P^\infty, \Z)\simeq \Z$ generated by the square of the generator $c\in H^2(\C P^\infty)$. A similar computation can be done for computing $H^*(\C P^3/\tau_3, \Z)$, where the spectral sequence is truncated by the relation $c^4=0$. We obtain that $H^4(\C P^3/\tau_3, \Z) = H^4(\C P^3, \Z)\simeq \Z$ generated by the square $C=c^2$ of the generator $c\in H^2(\C P^3,\Z)$. \end{proof}

\vspace{10pt}\begin{proof}[Proof of Theorem \ref{Th2}]
 Since $H^4(\R P^2,\Z)=0$, the vanishing of $\tilde \gamma^*(C)$ is a necessary condition for existence of a lift
 to $\R P^2$ of a map $\tilde \gamma: N\rightarrow \C P^3/\tau_3$. We will show that the condition is sufficient by using basic obstruction theory. 
We have that $\pi_3(\R P^2)=\pi_3(S^2)=\Z$, generated by the Hopf map. Here the action of $\pi_1(\R P^2)$
is trivial. This can be seen as follows. The Hopf map $h: S^3\rightarrow \C P^1=S^2$ is defined by
$h(z,z')=[z,z']$. The deck transformation on $\C P^1$ is $\tau_1: [z,z']\mapsto [-\overline z',\overline z]$. We  have
that $\tau_1 \circ h=h\circ v$, where $v(z,z')=(-\overline z',\overline z)$ defines an oriented homeomorphism which is homotopic to the identity via an isotopy which fixes the image of the base point in $\C P^1$ say $h(1,0)$.\\
Let us denote by 
$\overline{i}_2: \R P^2\rightarrow \C P^3/\tau_3$ the quotient map of the inclusion $i_2: \C P^1\rightarrow \C P^3$.
This map can be replaced by a fibration whose fiber $F$, so called the homotopy fiber of $\overline{i}_2$, controls the lifting problem. From the homotopy exact sequence of the fibration, we obtain that the first non trivial homotopy group of $F$ is $\pi_3(F)=\pi_3(S^2)=\Z$.
The first obstruction for lifting a map $\tilde \gamma : N\rightarrow \C P^3/\tau_3$ to $\R P^2$ belongs to $H^4(N,\pi_3(F))=H^4(N,\Z)$.  By functoriality, this obstruction vanishes if $\tilde \gamma^*(C)=0$. 
Here $N$ has dimension $4$ and there is no further obstruction, which completes the proof of a).\\
Using Theorem \ref{th index 3}, and the statement a) above, we obtain the proof of the statement b).
 \end{proof}
 
\vspace{10pt}\begin{proof}[Proof of Theorem \ref{th5}]~~
 The Borel construction $X \times_{\Z_2} S^\infty$ is homotopy equivalent to the orbit space $N =X/\tau$. We use the Leray-Serre spectral sequences $E_r$, $\overline{E_r}$ for studying  the cohomologies $H^*(N,\Z)$,  $H^*(N,\Z^-)$, see  Theorem \ref{LSSS} and Remark \ref{proj_s_as} in the appendix. 
 Under our hypothesis $\beta (x)\neq 0$ we get that the differential \mbox{$d_2^{0,1}:E_2^{0,1}=H^1(X,\Z)^s\rightarrow E_2^{2,0}=\Z_2$} vanishes. Using the product map \mbox{$E_2^{0,1}\otimes \overline E_2^{1,0}=H^1(X,\Z)^s\otimes \Z_2\stackrel{\sim}{\rightarrow} \overline E_2^{1,1}$}, we get that the differential \mbox{$\overline d_2^{1,1}:\overline E_2^{1,1}=H^1(X,\Z)^s\otimes \Z_2\rightarrow  \overline E_2^{3,0}=\Z_2$} also vanishes.
 We now consider \mbox{$\bar{d}^{0,2}_3 : 
 H^2(X,\Z)^{as}  \rightarrow \Z_2=H^3(\R P^\infty, \Z^-)$}. 
 If we suppose that $\bar{d}^{0,2}_3$ is surjective, we obtain that $\beta^-(x^2),$ which by functoriality lives in $\overline E_\infty^{0,3}=0$, vanishes.
 Conversely, suppose that $\bar{d}^{0,2}_3 =0$. Using the functoriality of the Bockstein homomorphism, we get that $\beta^-(x^2)= \gamma^*(\beta^{-}(\alpha^2))\neq 0$, where $\alpha$ is the generator of $H^1(\R P^\infty, \Z_2)$ and  $\beta^{-}$ in the second member is $\beta^{-} : H^2(\R P^\infty, \Z_2) \rightarrow H^3(\R P^\infty, \Z^-)$.
This completes the proof of statement 1. 

In the case where the primary obstruction $\beta^-(x^2)$ vanishes,
 the page $\overline E_3$ gives the quotient groups of the skeleton filtration 
  of $H^2(N,\Z^-)$, which can be formulated as the following exact sequence
\begin{equation}\label{seq_d3}
 0\rightarrow \overline E_3^{1,1}=H^1(X,\Z)^s\otimes \Z_2\rightarrow H^2(N,\Z^-)\stackrel{p^*}{\rightarrow} \ker(\overline d_3^{0,2})
 \rightarrow 0\ \end{equation}
 The last homomorphism is associated with the covering map $p: X\rightarrow N$.
Let \mbox{$\tilde{\gamma} : N \rightarrow \C P^3/\tau_3$} be a lift of $\gamma$. Here we have fixed base points and are working with pointed maps. We have a unique based equivariant  lift $\hat \gamma:
 X\rightarrow \C P^3$. We get a cohomology class $\hat \gamma^*(c)$ where $c$ is the canonical generator of $H^2(\C P^3,\Z)$.
 By functoriality we have that this class is antisymmetric and  $\overline d_3^{0,2}(\hat \gamma^*(c))\neq 0$.
 This defines a map from the set $R(\gamma)$ of homotopy classes of lifts $\tilde \gamma$ to
  $$L(X)=\{s\in H^2(X,\Z)^{as}, \ \overline d_3^{0,2}( s )=1 \text{\ mod\ $2$}\}\ .$$
  The group  $H^2(X,\Z)^{as}$ splits as a disjoint union of $\ker \overline d_3^{0,2}$ and $L(X)$, and we have a simply transitive action of the subgroup $\ker \overline d_3^{0,2}$ on $L(X)$ (affine structure).
 Given two lifts $\tilde \gamma$, $\tilde{\tilde\gamma}$, obstruction theory \cite[VI.5]{Whitehead} defines a difference cocycle 
 $d(\tilde \gamma,\tilde{\tilde \gamma})$ with value in $\pi_2(\C P^3)$ 
 whose cohomology class gives an affine structure over $H^2(N,\Z^-)$.
 Using the projection homomorphism $p^*: H^2(N,\Z^-)\rightarrow \ker \overline d_2^{0,2}\subset H^2(X,\Z)^{as}$, we get a class 
 $p^*(d(\tilde \gamma,\tilde{\tilde \gamma}))\in \ker \overline d_2^{0,2}\subset H^2(X,\Z)$.
The evaluation of the difference cocycle $d(\tilde \gamma,\tilde{\tilde \gamma})$ on a $2$-cell $e\subset N$ is equal to the evaluation of 
 $p^*(d(\tilde \gamma,\tilde{ \tilde \gamma}))$ on a lift $\hat e\in X$.
 Using the isomorphism $[X,\C P^3]\cong H^2(X,\Z)$, we get that
 $p^*(d(\tilde \gamma,\tilde{\tilde \gamma}))$ is equal to the difference class given by the two equivariant lifts $\hat \gamma$, $\hat{\hat \gamma}$. 
 It follows that $p^*(d(\tilde \gamma,\tilde{\tilde \gamma}))=\hat{\hat \gamma}^*(c)-\hat \gamma^*(c)$.  
 We deduce that the image of the map $\tilde \gamma\mapsto \hat \gamma^*(c)$ is equal to $L(X)$

The covering map $p: X\rightarrow N$ being oriented, the map $p^*: \Z=H^4(N,\Z)\rightarrow  H^4(X,\Z)=\Z$ is multiplication by $2$, in particuler it is injective.  It follows that the secondary obstruction  $\tilde \gamma^*(C)$ vanishes if and only if $p^*(\tilde \gamma^*(C))=0$. 
Recall from Theorem \ref{th index 3} that $C\in H^4(\C P^3/\tau_3,\Z)$ corresponds to $c^2\in H^4(\C P^3,\Z)$. We deduce $p^*(\tilde \gamma^*(C))=\hat \gamma^*(c^2)$.

 From Theorem \ref{Th2} we know that in the hypothesis  $\beta^-(x^2)=0$, we have  $ind_{\Z_2}(X, \tau)<3$ if and only if $\tilde \gamma^*(C)=0$ for all lifts $\tilde \gamma: N\rightarrow \C P^3/\tau_3$. 
 We proved that this is equivalent to $\hat \gamma^*(c)^2=0$ for all $c\in L(X)$, which establishes the statement 2.
\end{proof}

\vspace{10pt}\begin{proof}[Proof of Proposition \ref{prop6}]
Here we will prove that we have always $ind_{\Z_2}(X,\tau) \neq 4$. We know that it is satisfied when $x^4=0$. This means that the classifying class $\gamma$ compress to $\R P^3$. By the same arguments in the proofs of Theorems \ref{th index 3}, \ref{Th2}, it is easy to check that $x^4= \rho\circ \beta (x^3)$, where $\rho$ is the homomorphism which reduces the coefficients mod 2, $\beta : H^3(N, \Z_2) \longrightarrow H^4(N, \Z)$ is the Bockstein homomorphism associated to the short exact sequence $0 \longrightarrow \Z \overset{\times 2}\longrightarrow \Z \longrightarrow \Z_2 \longrightarrow 0$. Since $\beta(x^3)$ always vanishes in $H^4(N, \Z) \simeq \Z$, we obtain the desired result.
\end{proof}

\section{Some examples and applications}\label{Examples}
\subsection{The Borsuk-Ulam theorem for $S^1\times S^3$}
From \cite{Jahren} we have that a free involution on $S^1\times S^3$ is homotopy conjugate
to a standard one, $(t,v)\mapsto (-t,v)$, $(t,v)\mapsto (-t,r(v))$ ($r$ is an hyperplane reflection), $(t,v)\mapsto (t,-v)$,
$(t,v)\mapsto (\overline t,-v)$. The respective orbit spaces $N$ are $S^1\times S^3$, $S^1\tilde\times S^3$, $S^1\times \R P^3$ and $\R P^4 \# \R P^4$.
\begin{theorem}
The index $ind_{\Z_2}(S^1\times S^3, \tau)$ is equal to $1$ if the orbit space is $S^1\times S^3$ or $S^1\tilde\times S^3$, and is equal to $3$ in other cases.
\end{theorem}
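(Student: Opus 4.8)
The plan is to split the four orbit spaces into two groups and feed them into the three numbered assertions of the Introduction (extracted from \cite{Daciberg3}) together with Theorem \ref{th index 3}(a) and Proposition \ref{prop6}; note that Theorem \ref{th5} cannot be used here since $H_1(S^1\times S^3,\Z)\neq 0$. First I would dispose of the two cases where the orbit space fibres over $S^1$ with simply connected fibre $S^3$, i.e. $N=S^1\times S^3$ (for $(t,v)\mapsto(-t,v)$) and $N=S^1\tilde\times S^3$, the mapping torus of the reflection $r$. In both, $\pi_1(N)\cong\Z$ and the double cover $X\to N$ restricts on the base circle to the connected double cover $S^1\to S^1$, so the classifying class $x\in H^1(N,\Z_2)\cong\Z_2$ is the pullback $p^*\sigma$ of the generator $\sigma\in H^1(S^1,\Z_2)$ along the bundle projection $p\colon N\to S^1$ (use the K\"unneth formula, resp. the Wang sequence, to identify $H^1(N,\Z_2)$). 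By naturality of the Bockstein, $\beta(x)=p^*\beta(\sigma)\in p^*H^2(S^1,\Z)=0$, so $\beta(x)=0$ and assertion (2) gives $ind_{\Z_2}(X,\tau)=1$.

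For the remaining two cases, $N=S^1\times\R P^3$ and $N=\R P^4\#\R P^4$, the strategy is to show at once that $\beta^-(x^2)\neq 0$ — whence $ind_{\Z_2}>2$ by Theorem \ref{th index 3}(a) — and that $x^4=0$ in $H^4(N,\Z_2)$ — whence $ind_{\Z_2}\neq 4$ by assertion (3); since $ind_{\Z_2}\in\{1,2,3,4\}$ this forces $ind_{\Z_2}=3$. For $N=S^1\times\R P^3$ the covering is $\mathrm{id}\times(S^3\to\R P^3)$, so $x=\pi^*\alpha$ with $\pi\colon N\to\R P^3$ the projection and $\alpha$ the generator of $H^1(\R P^3,\Z_2)$; the local system $\Z^-$ on $N$ is pulled back from $\R P^3$, so $\beta^-(x^2)=\pi^*\beta^-_{\R P^3}(\alpha^2)$. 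A short twisted cellular computation in the spirit of the proof of Proposition \ref{prop 7} gives $H^*(\R P^3,\Z^-)=(0,\Z_2,0,\Z_2)$, so the Bockstein exact sequence shows $\beta^-_{\R P^3}\colon H^2(\R P^3,\Z_2)\to H^3(\R P^3,\Z^-)$ is injective, hence $\beta^-_{\R P^3}(\alpha^2)\neq0$; a K\"unneth argument (the $S^1$-factor contributes free cohomology) shows $\pi^*$ is injective on $H^3(\,\cdot\,,\Z^-)$, so $\beta^-(x^2)\neq0$. On the other hand $x^4=\pi^*(\alpha^4)=0$ since $\alpha^4=0$ in $H^4(\R P^3,\Z_2)$ — in fact here $N$ is orientable, so $ind_{\Z_2}<4$ already by Proposition \ref{prop6}. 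Hence $ind_{\Z_2}=3$.

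For $N=\R P^4\#\R P^4$ I would first record that $\pi_1(N)\cong\Z_2*\Z_2$, that the unique index-$2$ subgroup isomorphic to $\Z$ corresponds to the homomorphism sending both generators to $1$, and therefore $x=\alpha_1+\alpha_2=w_1(N)$, where $\alpha_i\in H^1(N,\Z_2)$ restricts to the generator of the $i$-th summand $\R P^4\setminus\mathrm{int}\,D^4\simeq\R P^3$ and to $0$ on the other. Via the collapse map $N\to\R P^4\vee\R P^4$ one gets $\alpha_1\alpha_2=0$, hence $x^2=\alpha_1^2+\alpha_2^2$ and $x^4=\alpha_1^4+\alpha_2^4$, and $H^k(N,\Z_2)\cong\Z_2\oplus\Z_2$ for $k=1,2,3$. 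Restricting to the first summand $A\simeq\R P^3$ — on which $\Z^-$ restricts to the nontrivial system, since $x|_A$ generates $H^1(A,\Z_2)$ — and using the $\R P^3$-computation above, $\beta^-(x^2)|_A=\beta^-_{\R P^3}(\alpha^2)\neq0$, so $\beta^-(x^2)\neq0$ and $ind_{\Z_2}>2$. Finally, $H^4(N,\Z_2)\cong\Z_2$ by $\Z_2$-Poincar\'e duality, the cup pairing $H^2(N,\Z_2)\times H^2(N,\Z_2)\to H^4(N,\Z_2)$ is nondegenerate, and $\alpha_1^2\cup\alpha_2^2=(\alpha_1\alpha_2)^2=0$; nondegeneracy then forces $\alpha_1^4=\alpha_2^4$ (each equal to the generator), so $x^4=\alpha_1^4+\alpha_2^4=0$ and $ind_{\Z_2}=3$.

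The only point requiring genuine care rather than bookkeeping is the ring structure of $H^*(\R P^4\#\R P^4,\Z_2)$ in the last case — in particular the identity $\alpha_1^4=\alpha_2^4$, which is exactly what makes $x^4$ vanish and keeps the index at $3$ rather than $4$. I would secure it through nondegeneracy of the intersection form on the closed $4$-manifold $N$ (equivalently, through the self-homeomorphism of $N$ exchanging the two summands, which acts trivially on $H^4(N,\Z_2)=\Z_2$). Everything else reduces to applying the results of Sections 1–3 and a couple of routine twisted-cellular and K\"unneth computations.
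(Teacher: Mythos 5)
Your proof is correct, and for three of the four orbit spaces it follows essentially the paper's route: $\beta(x)=0$ for $S^1\times S^3$ and $S^1\tilde\times S^3$ (the paper just notes $H^2(N,\Z)=0$, you pull back along $N\to S^1$ — same content), and for $S^1\times\R P^3$ both arguments reduce $\beta^-(x^2)$ to $\beta^-_{\R P^3}(\alpha^2)\neq 0$ by functoriality and then invoke Proposition \ref{prop6}. The genuine divergence is in the case $N=\R P^4\#\R P^4$. For the primary obstruction, the paper uses the equivariance of the second projection $S^1\times S^3\to S^3$ to produce a map $N\to\R P^3$ and pulls back, whereas you restrict to a punctured summand $\R P^4\setminus\mathrm{int}\,D^4\simeq\R P^3$; both are legitimate, and your version has the advantage of not depending on the explicit model of the involution, only on the decomposition of $N$ as a connected sum. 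For $x^4=0$, the paper runs the mod $2$ Leray--Serre spectral sequence of the Borel construction and reads off the multiplicative structure, while you compute the ring $H^*(\R P^4\#\R P^4,\Z_2)$ directly from the collapse map to $\R P^4\vee\R P^4$ (giving $\alpha_1\alpha_2=0$) plus nondegeneracy of the mod $2$ intersection pairing (giving $\alpha_1^4=\alpha_2^4\neq 0$, hence $x^4=\alpha_1^4+\alpha_2^4=0$). Your argument is more elementary and self-contained; the paper's spectral-sequence computation yields all the groups $H^k(N,\Z_2)$ along the way but leaves the multiplicative step ("one deduces that $x^4=0$") comparatively terse. The one point you flag as delicate — $\alpha_1^4=\alpha_2^4$ — is indeed the crux, and your Poincaré-duality justification of it is sound.
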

 \begin{proof} ~~
 \begin{itemize}
 \item In the case where $N$ is $S^1\times S^3$ or $S^1\tilde\times S^3$, we have $H^2(N,\Z)=\{0\}$ and the Bockstein of the classifying class $\beta(x)$ vanishes. This proves that the $\Z_2$ index is $1$. 
 \item In the case $N=S^1\times \R P^3$, 
the classifying class $x\in H^1( S^1\times \R P^3, \Z_2)$ is equal to $p_2^*(\alpha)$, where $\alpha\in H^3(\R P^3, \Z_2)$ is the generator. Using functoriality and Proposition
\ref{prop5}, we obtain $p^*(\beta^-(x^2))=\beta^-(\alpha^2)\neq 0$. We have 
 $\beta^-(x^2)\neq 0$ and $ind_{\Z_2}(S^1\times S^2, \tau) >2.$ Since $S^1 \times \R P^3$ is an oriented manifold, according to Proposition \ref{prop6}, 
 we have $ind_{\Z_2}(S^1\times S^3, \tau) \neq 4$. Therefore, $ind_{\Z_2}(S^1\times S^3, \tau) =3.$
\item In the case $N=\R P^4 \# \R P^4$, recall that the involution is $(t,v)\mapsto  (\overline{t},-v)$, with $\overline{t}$ the complex conjugate of $t$ in $S^1$. The second projection is equivariant. Functoriality applied to the induced map
$N\rightarrow \R P^3$ again gives $\beta^-x^2\neq 0$ for the classifying class $x\in H^1(N,\Z_2)$. Then we have $ind_{\Z_2}(S^1\times S^3, \tau)\geq 3$, it remains to compute $x^4$.
The cohomology rings $H^*(N,\Z_2)$ will be computed from the modulo $2$ Leray-Serre spectral sequence whose second page is $H^p( \R P^\infty, H^q(S^1 \times S^3, \Z_2))$.  We write below the bottom-left part of the pages $E_3$ and $E_5=E_\infty$.
$$ 
 \begin{tabular}{|c|c|c|c|c|}
\hline 
$\Z_2 $& $\Z_2$  & $\Z_2$ & $\Z_2$ & $\Z_2$ \\ 
\hline 
$\Z_2 $& $\Z_2$  & $\Z_2$ & $\Z_2$ & $\Z_2$ \\
\hline 
0 & 0 & 0 &0& 0 \\ 
\hline 
$\Z_2 $& $\Z_2 $ & $\Z_2$ & $\Z_2$ & $\Z_2$ \\
\hline 
$ \Z_2$ & $\Z_2$ & $\Z_2$& $\Z_2$& $\Z_2$\\ 
\hline 
\end{tabular} \hspace{2cm}
 \begin{tabular}{|c|c|c|c|c|}
\hline 
 0 & 0 & 0 &0& 0\\ 
\hline 
 0 & 0 & 0 &0& 0\\
\hline 
0 & 0 & 0 &0& 0 \\ 
\hline 
$\Z_2 $& $\Z_2 $ & $\Z_2$ & $\Z_2$ & $0$ \\
\hline 
$ \Z_2$ & $\Z_2$ & $\Z_2$& $\Z_2$& $0$\\ 
\hline 
\end{tabular} $$
Hence we have  $H^1(\R P^4 \# \R P^4, \Z_2)\simeq \Z_2 \oplus \Z_2$, and the characteristic class will take the value $(1,0)$ in this case. Also $H^2(\R P^4 \# \R P^4, \Z_2) \simeq \Z_2 \oplus \Z_2$, $H^3(\R P^4 \# \R P^4, \Z_2) \simeq \Z_2 \oplus \Z_2$ and $H^4(\R P^4 \# \R P^4, \Z_2) \simeq \Z_2.$ Since the spectral sequence is compatible with multiplicative structure \cite[Theorem 3]{Serre}, one deduces that $x^2 \neq 0,$ $x^3 \neq 0,$ and $x^4 =0$. Then $ind_{\Z_2}(S^1\times S^3, \tau)\neq 4$.  We conclude that $ind_{\Z_2}(S^1\times S^3, \tau)=3$.  
\end{itemize}
 \end{proof}
\subsection{The secondary obstruction, an example given by surgery presentation}
In this subsection, we  consider an oriented example $(X,\tau)$, $N=X/\tau$, where the primary obstruction $\beta^-(x^2)$ vanishes and compute the secondary obstructions as described in Theorem \ref{th5}.
 
Suppose that $\beta(x)\neq 0$,
the goal will be to compute
\begin{enumerate}
\item the spectral sequence map 
$$\overline d_3^{2,0}:H^2(X,\Z)^{as}\rightarrow \Z_2=H^3(\R P^\infty,\Z^-)\ ,$$
\item the square of any class $y\in H^2(X,\Z)^{as}$ for which $d_3^{0,2}(y)\neq 0$.
\end{enumerate}

We refer to \cite{Gompf} for presentation of closed oriented $4$-dimensional manifolds by framed link diagrams in the sphere $S^3$, with dotted components representing $1$-handles and the other describing  $2$-handles. In the case where the $2$-handlebody obtained from the ball $D^4$ by gluing $1$ and $2$-handles has boundary a connected sum of $S^1\times S^2$, it can be closed with $3$-handles and one $4$-handle giving a closed $4$-manifold uniquely defined up to diffeomorphism.
We consider the closed $4$-manifold $N$ presented in Figure \ref{manifoldN}. 
\begin{figure}[h!]
\begin{center}
\includegraphics[scale=0.4]{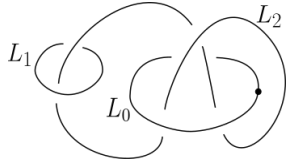} 
\end{center}
\caption{The manifold $N$}
\label{manifoldN}
\end{figure}

An elementary Kirby calculus argument shows that the boundary of the $2$-handlebody obtained with the $1$-handle and the two $2$-handles is $S^1\times S^2$ and we can close with a $3$-handle and a $4$-handle. 
It is useful to understand the attachement sphere for the $3$-handle. The original sphere from the $1$-handle is intersecting the component $L_2$ twice. After attaching the $2$-handles, we get a sphere by removing two small discs around the intersection points, gluing small tubes around $L_2$ and filling with two parallel discs on the boundary of the $2$-handle coming from $L_1$.
The handle structure allows to compute the homology from the complex generated by the cores of the handles. The groups are $C_0=\Z$, $C_1=\Z$, $C_2=\Z\oplus \Z$, $C_3=\Z$, $C_4=\Z$.
After fixing orientations, the boundary maps are $\partial_1=0$, $\partial_2=(0,2)$, $\partial_3=(2,0)$, $\partial_4=0$.
We deduce the homology groups
$H_0(N,\Z)=\Z$, $H_1(N,\Z)=\Z_2$, $H_2(N,\Z)=\Z_2$,  $H_3(N,\Z)=0$, $H_4(N,\Z)=\Z$.
We have $H^1(N,\Z_2)=\Z_2$ which implies that $N$ has a unique non trivial double cover $X$. 
Denote by $\tau: X\rightarrow X$ the deck transformation, then from Lemma \ref{prop6} we have $ind_{\Z_2}(X, \tau)<4.$ 
Also, denote by $\Lambda=\Z[\Z_2]$ the group ring with generator $\tau$. We first compute the homology and cohomology of $N$ with local coefficient $\Lambda$. The handle decomposition of $N$ provides a cell structure for $N$ and an equivariant cell structure for $X$.  Over $\Lambda$ the cell complex for $X$ is free with a generator for each cell of $N$. The choice of the lift in $X$ is fixed by connecting the cell in $N$ to the base point. This allows to read the boundary maps on the Kirby diagram.
The cell chain complex of $N$ with coefficients $\Lambda$ is the following.
$$\Lambda\overset{\tau -1} \longleftarrow \Lambda \overset{(0,1+\tau)} \longleftarrow\Lambda \oplus \Lambda \overset{(1+\tau,0)} \longleftarrow\Lambda \overset{\tau -1} \longleftarrow \Lambda$$
We deduce the homology groups
$H_0(N,\Lambda)=\Lambda/(\tau-1)$, $H_1(N,\Lambda)=0$, $H_2(N,\Lambda)=\Lambda/(1+\tau)\oplus \Lambda^{as}$,  $H_3(N,\Lambda)=0$, $H_4(N,\Lambda)=\Lambda^s$.
Here $\Lambda^s=(1+\tau)\Lambda$ and $\Lambda^{as}=(\tau-1)\Lambda$. The generators for  $H_2(N,\Lambda)$ are $[L_1]=-[\tau L_1]$ and $[(1-\tau)L_2]$.
The dual complex gives the cohomology groups $H^0(N,\Lambda)=\Lambda^s$, $H^1(N,\Lambda)=0$, $H^2(N,\Lambda)= \Lambda^{as}\oplus \Lambda/(1+\tau)$,  $H^3(N,\Lambda)=0$, $H^4(N,\Lambda)=\Lambda/(\tau-1)$.
The generators for  $H^2(N,\Lambda)$ are $[(1-\tau)L_1^\sharp]$ and $[L_2^\sharp]=-[\tau L_2^\sharp]$, where $\sharp$ denote the dual basis of the cochain cell complex.
The cochain complex over $\Z^-$ is 
$$\Z\overset{-2} \longrightarrow \Z \overset{(0,0)} \longrightarrow\Z \oplus \Z \overset{(0,0)} \longrightarrow\Z \overset{-2} \longrightarrow \Z$$
which gives
$H^0(N,\Z^-)=0$, $H^1(N,\Z^-)=\Z_2$, $H^2(N,\Z^-)= \Z\oplus \Z$,  $H^3(N,\Z^-)=0$, $H^4(N,\Z^-)=\Z_2$.
The cohomology of $N$ with coefficient $\Lambda$ coincides with $H^*(X,\Z)$ as a $\Lambda$-module. We get the group $H^2(X,\Z)^{as}=\Z\oplus \Z$.
Observe that the core of the $2$-handle attached on $L_1$ is included in an embedded sphere, while the core of the $2$-handle attached on $L_2$ is included in a projective plane (use the Mobius band bounded by $L_2$). By restricting to these surfaces, functoriality of the exact sequence \eqref{seq_d3} determines the boundary map $\overline d_3^{0,2}: \Z\oplus \Z \rightarrow \Z_2$, $(x,y)\rightarrow \overline y$. 
The secondary obstructions to be computed are the square of the classes $(a,2b+1)\in \Z^2=H^2(X,\Z)^{as}$.
It is convenient to analyze Poincar\'e duality in order to compute the auto-intersection of the corresponding homology classes.
The basis for $H^2(X,\Z)=H^2(X,\Z)^{as}=\Z^2$ is represented by $[L_1^\sharp-\tau L_1^\sharp]$, $[L_2^\sharp]=-[\tau L_2^\sharp]$, while the basis for $H_2(X,\Z)=H_2(X,\Z)^{as}=\Z^2$ is $[L_1]=-\tau[L_1]$ and $[L_2-\tau L_2]$. In the double cover $X$, the first generator is represented by the sphere which lifts the core of the $2$-handle $L_1$ and close with a disk, while the second generator is represented by the band which is the double cover of the Mobius strip closed with the two discs which are the lifts of the core of the $2$-handle $L_2$. The intersection form on generators reads as follows.\\
$$ 
 \begin{tabular}{|c|c|c|}
\hline 
  & $[L_1]$ & $[L_2-\tau L_2] $\\ \hline
 $[L_1]$ & 0 & 1 \\
\hline 
$[L_2-\tau L_2] $ & 1& 0 \\ 
\hline 
\end{tabular} $$
 The Poincaré duals of $[L_1^\sharp-\tau L_1^\sharp]$, $[L_2^\sharp]$ are respectively   $[L_2-\tau L_2] $ and $[L_1]$.
We deduce that the square of $(a,2b+1)\in \Z^2=H^2(X,\Z)=H^2(X,\Z)^{as}$ evaluated on the fundamental class is 
$$ \langle a[L^\sharp_1-\tau L^\sharp_1]+(2b+1)[L^\sharp_2])^2,[N]\rangle= (a[L_2-\tau L_2]+(2b+1)[L_1])^2= 2a(2b+1)\ ,$$
which vanishes for $a=0$.
We obtain the following result.
\begin{proposition}
 The  
$\Z_2$-index of $X$ with the deck involution $\tau$ is $2$.
\end{proposition}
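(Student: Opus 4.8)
The plan is to apply Theorem~\ref{Th2}(c) directly: since $\tau$ is oriented we already know $ind_{\Z_2}(X,\tau)<4$, so the $\Z_2$-index is $2$ or $3$, and it equals $2$ precisely when the primary obstruction $\beta^-(x^2)$ vanishes and there exists a lift $\tilde\gamma:N\to\C P^3/\tau_3$ with $\tilde\gamma^*(C)=0$. By Theorem~\ref{th5}, under the hypothesis $H_1(X,\Z)=0$ (which holds here since $H_1(N,\Z)=\Z_2$ and a short computation with the $\Lambda$-complex gives $H_1(X,\Z)=H_1(N,\Lambda)=0$), the first condition is equivalent to surjectivity of $\overline d_3^{0,2}$. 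So the first step is to record that $\overline d_3^{0,2}:\Z\oplus\Z\to\Z_2$, $(x,y)\mapsto\overline y$, is indeed surjective, hence $\beta^-(x^2)=0$.

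Next I would invoke the reformulation in Theorem~\ref{th5}(2): $ind_{\Z_2}(X,\tau)=3$ if and only if every antisymmetric class $y\in H^2(X,\Z)^{as}$ with $\overline d_3^{0,2}(y)\neq 0$ has $y^2\neq 0$ in $H^4(X,\Z)\cong\Z$. Equivalently, $ind_{\Z_2}(X,\tau)=2$ iff there exists $y\in H^2(X,\Z)^{as}$ with $\overline d_3^{0,2}(y)\neq 0$ and $y^2=0$. The classes with $\overline d_3^{0,2}(y)\neq0$ are exactly those of the form $(a,2b+1)$ in the basis $[L_1^\sharp-\tau L_1^\sharp]$, $[L_2^\sharp]$. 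So the task reduces to evaluating the cup square of $(a,2b+1)$ against the fundamental class and checking that it can be made zero.

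The computational heart — which I regard as the main obstacle — is identifying the intersection form via Poincaré duality. Here one uses the handle/Kirby description: the Poincaré dual of $[L_1^\sharp-\tau L_1^\sharp]$ is the homology class $[L_2-\tau L_2]$ (the double cover of the Möbius band closed off with the two lifted cores of the $L_2$-handle), and the Poincaré dual of $[L_2^\sharp]$ is $[L_1]$ (the lift of the $L_1$-handle core closed with a disk). One must verify the self-intersections of these two classes vanish and their mutual intersection is $1$, i.e. the intersection form is hyperbolic on this basis. Granting this, the square of $(a,2b+1)$ evaluated on $[N]$ is $\langle(a[L_2-\tau L_2]+(2b+1)[L_1])^2,[N]\rangle = 2a(2b+1)$.

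Finally, taking $a=0$ gives a class $y=(0,1)\in H^2(X,\Z)^{as}$ with $\overline d_3^{0,2}(y)=\overline 1\neq 0$ and $y^2=0$. By Theorem~\ref{th5}(2) (or directly Theorem~\ref{Th2}(c)) this forces $ind_{\Z_2}(X,\tau)\neq 3$; combined with $\beta^-(x^2)=0$ and $ind_{\Z_2}(X,\tau)<4$, and noting the index cannot be $1$ since $\beta(x)\neq 0$ (as $H_1(X,\Z)=0$ forces $\beta(x)$ to generate the order-two subgroup of $H^2(N,\Z)$), we conclude $ind_{\Z_2}(X,\tau)=2$.
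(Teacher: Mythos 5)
Your proposal is correct and follows essentially the same route as the paper: the paper's own argument for this proposition is precisely the computation you describe (surjectivity of $\overline d_3^{0,2}:(x,y)\mapsto\overline y$ giving the vanishing of the primary obstruction, the hyperbolic intersection form on the basis Poincar\'e-dual to $[L_1^\sharp-\tau L_1^\sharp]$, $[L_2^\sharp]$, and the class $(0,1)$ of square zero), concluded via Theorem~\ref{th5}. The one step you leave ``granted'' --- the intersection table --- is exactly what the paper reads off from the geometric representatives (the lifted sphere over $L_1$ and the doubled M\"obius band over $L_2$), so there is no genuine divergence; your explicit exclusion of index $1$ via $\beta(x)\neq 0$ is a point the paper only covers implicitly.
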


\begin{remark}
It's challenging to work out an example where the obstruction $\beta^-(x^2)$ is zero but the secondary
obstructions never vanish.
\end{remark}

\begin{appendix}
\section*{Appendix: Leray-Serre spectral sequence for twisted cohomology}
Let $(X,\tau)$ be a space with free involution. The Borel construction $X\times_{\Z_2} S^\infty$ is homotopy equivalent to the orbit space $N=X/\tau$. We get a fibration $p: X\times_{\Z_2} S^\infty\rightarrow \R P^\infty$ with fiber $X$. For integer coefficients, the Leray-Serre cohomology spectral sequence of this fibration  has second page
$E_2^{p,q}=H^p(\R P^\infty,H^q(X,\Z))$ and converges to $H^{p+q}(N,\Z)$. Here $H^q(X,\Z)$ is considered as a group with local coefficients. In this appendix we will state a variant of this spectral sequence which computes
the cohomology   $H^*(N,\Z^-)$. Recall that  
the notation $\Z^-$ in the cohomology of the orbit space of an involution means  the local coefficient group $\Z$, where the action of the involution on chains is by $-1$.

Working with CW-complex structures, the Leray-Serre spectral sequence is produced from the filtration of the total space which is the inverse image of the skeleton filtration of the basis. Here we will work with the standard cell decomposition of $\R P^\infty$ which has one cell in each dimension corresponding to an equivariant cell decomposition of $S^\infty$. We also use an equivariant cell decomposition of $X$. We then have a $\Z_2$-equivariant cell decomposition of $X\times S^\infty$. Note that here the involution is $T: (x,y)\mapsto (\tau(x),-y)$.
The cohomology groups $H^*(N,\Z^-)$ are computed from the complex 
$\mathrm{Hom}_{\Z[\Z_2]}(C_*(X\times S^\infty),\Z^-)$,
which is filtrated as usual. We deduce a converging Leray-Serre spectral sequence. We identify below its second page.

\begin{theorem}\label{LSSS}
The Leray-Serre spectral sequence computing $H^*(N,\Z^-)$ from the complex $\mathrm{Hom}_{\Z[\Z_2]}(C_*(X\times S^\infty),\Z^-)$ has second page $$\overline E_2^{p,q}=H^p(\R P^\infty,H^q(X,\Z)\otimes \Z^-)\ .$$
Here 
$H^q(X,\Z)\otimes \Z^-$ is the local group where the generator of $\pi_1(\R P^\infty)$ acts by $-\tau_*$.
\end{theorem}
\begin{proof}[Proof of theorem \ref{LSSS}]~~
Denote by $e_j^q$, $q\geq 0$, $j\in J_q$,  $\tau(e_j^q)=e_{\overline{j}}^q$, the cells of $X$. The standard equivariant cells of $S^\infty$ are $f^p$, $-f^p$, $p\geq 0$.
We have the product cell decomposition of $X\times S^\infty$ by the product cells $e_j^q\times f^p$ and
\mbox{$T(e_j^q\times f^p)=e_{\overline{j}}^q\times (-f^p)$} with filtration degree given by $p$.
The $0$-page of the spectral sequence is 
$$\overline E_0^{p,q}=\mathrm{Hom}_{\Z[\Z_2]}(C_q(X)\otimes (\Z \,f^p\oplus \Z\,(-f^p)),\Z^-)\  .$$
For a cochain $\alpha\in \mathrm{Hom}_{\Z[\Z_2]}(C_q(X)\otimes (\Z \,f^p\oplus \Z\,(-f^p)),\Z^-)$, there is an associated cochain $\dot \alpha \in \mathrm{Hom}(\Z \,f^p\oplus \Z\,(-f^p),\mathrm{Hom}(C_q(X),\Z))$ defined by
$$\dot \alpha(\pm f^p)(e_j^q)=\alpha(e_j^q\otimes (\pm f^p))\ .$$
 It satisfies
 $$\dot \alpha(f^p)(\tau(e_j^q))=\alpha( e_{\overline j}^q\otimes  f^p) =-\alpha( e_{j}^q\otimes (-f^p))=-\dot \alpha(-f^p)(e_j^q)\ .$$
This defines an isomorphism
$$\mathrm{Hom}_{\Z[\Z_2]}(C_q(X)\otimes (\Z \,f^p\oplus \Z\,(-f^p)),\Z^-)\cong
\mathrm{Hom}_{\Z[\Z_2]}((\Z \,f^p\oplus \Z\,(-f^p), \mathrm{Hom}(C_q(X),\Z)\otimes \Z^- ).$$
For the $1$-page, obtained with the vertical differential we get
\mbox{$\overline E_1^{p,q}=\mathrm{Hom}_{\Z[\Z_2]}(C_p(S^\infty) ,H^q(X,\Z)\otimes \Z^-)$}.
The $\overline E_2$ page is then obtained with the horizontal differential, we have
\mbox{$\overline E_2^{p,q}= H^p(\R P^\infty,H^q(X,\Z)\otimes \Z^-)$.}
\end{proof}

\begin{remark}\emph{(Product structure)}\label{product}
The multiplicative structure on the Leray-Serre spectral sequence is constructed in \cite[Theorem 3]{Serre}, see also
\cite[Theorem 5.2]{McCleary}.  This result applies to the pages $E_r^{*,*}$, $r\geq 2$, of the spectral sequence  which computes the cohomologies $H^*(N,\Z)$ and  $H^*(N,\Z_2)$. Here we need a slightly more general version where the product involves both constant coefficient group $\Z$ and twisted coefficient group $\Z^-$. This will be done by considering as coefficient group
$\Lambda=\Z\oplus \Z^-$, with ring structure given by the projection map
$$\mu :\Lambda\otimes \Lambda=\Z\oplus \Z^-\oplus \Z^-\oplus \Z\rightarrow \Z\oplus \Z^-=\Lambda\ ,$$
which means $(a,b)(a',b')=(aa'+bb',ab'+ba')$. 
Here we consider $\Lambda\otimes \Lambda$ as a $\Z_{[\Z_2]}$-module with the diagonal action.
The cochain cell complex with coefficients in $\Lambda$ is a filtrated
differential graded algebra and we obtain a spectral sequence of algebras \cite[Theorem 2.14]{McCleary}. 
The spectral sequence with coefficients in $\Lambda=\Z\oplus \Z^-$ splits as $E_r\oplus \overline E_r$ where $E_r$ is the spectral sequence with coefficients in $\Z$ and $\overline{E_r}$ is the spectral sequence described in Theorem \ref{LSSS}. We get products 
$$E_r^{p,q}\otimes \overline E_r^{p',q'}\rightarrow \overline E_r^{p+p',q+q'}\ \ ,\ \ \overline E_r^{p,q}\otimes \overline E_r^{p',q'}\rightarrow E_r^{p+p',q+q'}\ .$$
Moreover the differentials satisfy the Leibniz rule.
\end{remark}

\end{appendix}
\textbf{Acknowledgements}\\
We are thankful to Pierre Vogel for an enlightening discussion which was very helpful for clarifying the spectral sequence computing homology of a fibration over twisted coefficients exposed in Appendix.

\end{document}